\documentclass{amsart}
\usepackage[all]{xy}
\usepackage{amsmath,amssymb,amsthm,mathrsfs,bm}

\DeclareMathOperator*{\restprod}%
 {\mathchoice{\ooalign{\ensuremath{\displaystyle\prod}\crcr\ensuremath{\displaystyle\coprod}}}%
             {\ooalign{\ensuremath{\textstyle\prod}\crcr\ensuremath{\textstyle\coprod}}}%
             {\ooalign{\ensuremath{\scriptstyle\prod}\crcr\ensuremath{\scriptstyle\coprod}}}%
             {\ooalign{\ensuremath{\scriptscriptstyle\prod}\crcr\ensuremath{\scriptscriptstyle\coprod}}}%
 }

\theoremstyle{definition}
\newtheorem{dfn}{Definition}[section]

\newtheorem{rmk}[dfn]{Remark}

\theoremstyle{plain}
\newtheorem{prop}[dfn]{Proposition}
\newtheorem{lem}[dfn]{Lemma}
\newtheorem{thm}[dfn]{Theorem}

\def \O{\mathcal{O}}
\def \Kab{K^{\mathrm{ab}}}
\def \GKab{G_K^{\mathrm{ab}}}
\def \P{\mathcal{P}}
\def \I{\mathcal{I}}
\def \p{\mathfrak{p}}
\def \m{\mathfrak{m}}
\def \a{\mathfrak{a}}

\def \q{\mathfrak{q}}
\def \Af{\mathbb{A}_{K,f}}
\def \R{\mathbb{R}}

\def \Q{\mathbb{Q}}

\def \Z{\mathbb{Z}}
\def \N{\mathbb{N}}
\def \BB{\mathbb{B}}
\def \KK{\mathbb{K}}
\def \H{\mathcal{H}}
\def \Prim{\mathrm{Prim}}
\def \Ind{\mathrm{Ind}}

\title{Primitive ideals and K-theoretic approach to Bost-Connes systems}
\author{Takuya Takeishi}
\address{Department of Mathematical Sciences, University of Tokyo}
\email{takeishi@ms.u-tokyo.ac.jp}
\date{}

\begin{document}

\begin{abstract}
By KMS-classification theorem, the Dedekind zeta function is an invariant of Bost-Connes systems. 
In this paper, we show that it is in fact an invariant of Bost-Connes $C^*$-algebras. 
We examine second maximal primitive ideals of Bost-Connes $C^*$-algebras, and apply $K$-theory to some quotients. 
\end{abstract}

\maketitle

\section{Introduction}
For a number field $K$, there is a $C^*$-dynamical system $\mathcal{A}_K = (A_K,\sigma_t)$ the so called {\it Bost-Connes system} for $K$. 
Its extremal KMS$_{\beta}$-states for $\beta > 1$ correspond to the Galois group $\GKab = G(\Kab/K)$, 
and its partition function coincides with the Dedekind zeta function $\zeta_K(\beta)$ (cf.~\cite{LLN}). 
Since the zeta function naturally arises as an operator algebraic invariant, 
we can naturally ask whether or not we can get other number theoretic invariants from operator algebraic tools. 
This seems to be optimal because we use the class field theory to construct the $C^*$-algebra $A_K$, 
although the partition function does not reflect such a construction so much. 
So we can expect that even the $C^*$-algebra $A_K$ itself may have plenty of information of the original field $K$. 
Indeed, the narrow class number $h_K^1$ is an invariant of $A_K$ by \cite{T}, looking at maximal primitive ideals of $K$. 

In this paper, we proceed this strategy to the next step. We look at second maximal primitive ideals (Definition \ref{secmax}). 
There is a good relation between second maximal primitive ideals and prime ideals of the integer ring $\O_K$ as in Proposition \ref{summary}. 
So we can expect that the primitive ideals (or the families of them) corresponding to a fixed prime $\p$ may remember some information of $\p$. 
Indeed, the information of $\p$ can be recovered as in Theorem \ref{main2} by using $K$-theory. 
As a consequence, we obtain the following classification theorem: 
\begin{thm} \label{main1}
Let $K,L$ be number fields and $A_K,A_L$ be associated Bost-Connes $C^*$-algebras. 
If $A_K$ is isomorphic to $A_L$, then we have $\zeta_K=\zeta_L$. 
\end{thm}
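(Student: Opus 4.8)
The plan is to recover from the bare $C^*$-algebra $A_K$ the multiset $\{N(\p) : \p \in \mathrm{Spec}(\O_K),\ \p \neq 0\}$ of norms of nonzero prime ideals. Since the Dedekind zeta function admits the Euler product $\zeta_K(s) = \prod_{\p} (1 - N(\p)^{-s})^{-1}$, this multiset (equivalently, the counting function $n \mapsto \#\{\p : N(\p) = n\}$) determines $\zeta_K$ completely; so once we show that an isomorphism $A_K \cong A_L$ forces these multisets to agree, the theorem follows.

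First I would isolate the set of second maximal primitive ideals of $A_K$ in a way that is manifestly intrinsic. By Definition \ref{secmax} this is an order-theoretic condition on the lattice of closed two-sided ideals, or equivalently a condition on the topological space $\Prim(A_K)$; either way it is preserved by any $*$-isomorphism. Hence an isomorphism $A_K \to A_L$ restricts to a bijection between the second maximal primitive ideals of the two algebras, and more generally respects all inclusion and closure relations among them and the maximal primitive ideals lying above them.

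Next, I would use Proposition \ref{summary} to organize the second maximal primitive ideals into families indexed by the nonzero primes $\p$ of $\O_K$. The crucial point to verify --- and the step I expect to be the main obstacle --- is that this $\p$-indexed partition is itself intrinsic to $A_K$: two second maximal primitive ideals must be declared to lie over the same prime by a criterion phrased only in terms of the ideal lattice, of $\Prim(A_K)$, or of the isomorphism types of the attached subquotients, with no reference to the time evolution $\sigma_t$ or to number theory. Granting this, the isomorphism $A_K \cong A_L$ carries $\p$-families to $\p'$-families, and thus induces a bijection $\p \mapsto \p'$ between the nonzero primes of $\O_K$ and of $\O_L$.

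Finally, for a fixed prime $\p$ I would pass to the quotient (or subquotient) of $A_K$ determined by the $\p$-family and compute its $K$-theory, most likely via a crossed-product presentation together with a Pimsner--Voiculescu type sequence. By Theorem \ref{main2} this $K$-theoretic data pins down $N(\p)$. Since $K$-theory is a $C^*$-isomorphism invariant and the $\p$-family of $A_K$ is matched with the $\p'$-family of $A_L$, we obtain $N(\p) = N(\p')$ for matched primes; therefore the two norm multisets coincide and $\zeta_K = \zeta_L$. Beyond the intrinsicness issue above, the remaining delicate point is to make the $K$-theory computation precise enough that ramification and residue-degree contributions do not obscure the readout of the norm itself.
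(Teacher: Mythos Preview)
Your overall architecture matches the paper's: isolate the second maximal primitive ideals, partition them by primes $\p$, and read off arithmetic data from the $K$-theory of associated subquotients. Two points deserve comment.

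First, your worry about the intrinsicness of the $\p$-indexed partition is resolved in the paper purely topologically: $\I_{2,K}$, with the subspace topology from $\Prim A_K$, decomposes as the disjoint union of the compact connected sets $\hat{\Gamma}_{\{\p\}^c}$, so the partition by connected components is exactly the partition by primes (Proposition~\ref{summary}). Connected components of a subspace of $\Prim A_K$ are manifestly an isomorphism invariant, so no further work is needed there.

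The genuine gap is in your final step. Theorem~\ref{main2} does \emph{not} recover the norm $N(\p)=p^f$; it yields only
\[
T_*\bigl(K_0(\textstyle\bigcap \I_{1,K}/\bigcap \mathcal{C}_{\p})\bigr)\cong \Z[1/p],
\]
i.e.\ the rational prime $p$ lying under $\p$, with the inertia degree $f$ invisible. (In the course of the proof one has $h_{m+1}/h_m = p^f$ by Lemma~\ref{order}, but after forming $\bigcup_m h_m^{-1}\Z$ the exponent $f$ is lost, since $\Z[1/p^f]=\Z[1/p]$.) Consequently you cannot read off the multiset $\{N(\p)\}$ and conclude via the Euler product as you propose. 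What the $C^*$-algebra does give intrinsically is, for each rational prime $p$, the \emph{splitting number} $g_K(p)=\#\{\p\mid p\}$: it is the number of connected components $\mathcal{C}$ of $\I_{2,K}$ whose associated trace image is isomorphic to $\Z[1/p]$. The paper then closes the argument by invoking the theorem of Stuart--Perlis, a nontrivial number-theoretic input asserting that equality of $g_K(p)$ for all $p$ already forces $\zeta_K=\zeta_L$. Without this (or some substitute), your outline does not reach the conclusion; the paper itself remarks at the end that extracting the inertia degree operator-algebraically remains an open question.
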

As we said, $\zeta_K$ is an invariant of Bost-Connes systems. 
Theorem \ref{main1} says it is in fact an invariant of Bost-Connes $C^*$-algebras. 
Generally speaking, in order to classify a class of non-simple $C^*$-algebras, it is effective to combine ideal structure theory and $K$-theory. 
Our strategy goes in a usual way in this sense, but how to use the information of ideals seems to depend on $C^*$-algebras. 
So we think our strategy is interesting as an concrete example of classifying a class of non-simple $C^*$-algebras. 

Taking semigroup $C^*$-algebras $C^*_r(\O_K \rtimes \O_K^{\times})$ is another way to construct $C^*$-algebras related to number fields. 
For semigroup $C^*$-algebras, there is a work of Li \cite{Li1} for the classification of such $C^*$-algebras.  
According to \cite{Li1}, the minimal primitive ideals of the semigroup $C^*$-algebras $C^*_r(R\rtimes R^{\times})$ are labeled by prime ideals of $R$, 
and we can extract some information of original prime ideals by looking at $K$-theory of the quotient. 
This work is inspired by Li's work, although the proof is much different. 
It is interesting that there seems to exist a common philosophy behind two different constructions. 

Theorem \ref{main1} follows from Theorem \ref{main2} and our goal is to show Theorem \ref{main2}. 
In section \ref{defn}, we recall the construction of Bost-Connes $C^*$-algebras. 
We give a proof of Theorem \ref{main2} in Section \ref{pf}. 

\section{Definitions of Bost-Connes systems} \label{defn}
In this section, we quickly review the definition of Bost-Connes systems for number fields. 
For detail, the reader can consult \cite[p.388]{Y} or \cite[Section 2]{T}. 

Let $K$ be a number field. 
The symbol $J_K$ denotes the group of fractional ideals of $K$, and $I_K$ denotes the semigroup of integral ideals of $K$.
Define a compact space $Y_K$ by  
\begin{equation*}
Y_K = \hat{\O}_K \times_{\hat{O}_K^*} \GKab,
\end{equation*}
where $\hat{\O}_K$ is the profinite completion of $\O_K$, 
and $\hat{\O}_K^*$ acts on $\hat{\O}_K$ by multiplication and on $\GKab$ via Artin reciprocity map. 

The ideal $\a \in I_K$ acts on $Y_K$ by  
\begin{equation*}
\a \cdot [\rho,\alpha] = [\rho a, [a]_K^{-1}\alpha], 
\end{equation*}
where $a$ is a finite id\'ele which generates $\a$. 

\begin{dfn}
The $C^*$-algebra $A_K = C(Y_K)\rtimes I_K$ is called the {\it Bost-Connes $C^*$-algebra} for $K$. 
\end{dfn}

The term {\it Bost-Connes system} for $K$ usually means the $C^*$-dynamical system $\mathcal{A}_K = (A_K,\sigma_t)$, 
where the time evolution $\sigma_t$ is determined by the ideal norm. 
The time evolution adds the information of inertia degrees of primes to $A_K$, 
and such information can be recovered as the partition function (cf.~ \cite{LLN}). 
However, we do not use the time evolution and we focus on the $C^*$-algebra $A_K$ itself in this paper. 

The Bost-Connes $C^*$-algebra $A_K$ can be written as a full corner of a non-unital group crossed product. 
Let 
\begin{equation*}
X_K = \Af \times_{\hat{\O}_K^*} \GKab, 
\end{equation*}
where $\Af$ is the finite ad\'ele ring of $K$. Let $\tilde{A}_K = C_0(X_K)\rtimes J_K$. 
Then $A_K = 1_{Y_K}\tilde{A}_K1_{Y_K}$ and $1_{Y_K}$ is a full projection. 
This presentation is crucial to determine the primitive ideal space of $A_K$ in \cite{T}.

\section{Proof of the Main Theorem} \label{pf}

\subsection{Arithmetic Observations} \label{arith}
First we fix notations (basically, we follow notations of \cite{N}). Let $K$ be a number field. 
The set of all finite primes is denoted by $\P_K$. 
The symbol $K^*_+$ denotes the group of all totally positive nonzero elements of $K$ and let $\O_{K,+}^{\times} = \O_K \cap K_+^*$. 
For any ideal $\m$ of $\O_K$, Let 
\begin{eqnarray*}
&&J_K^{\m} = \{ \a \in J_K\ |\ \a \mbox{ is prime to } \m \}, \\
&&P_K^{\m} = \{ (k) \in J_K\ |\ k \in K^*_+, k \equiv 1 \mod \m \}. 
\end{eqnarray*}
Similarly, for any subset $S$ of $\P_K$, $J_K^S$ is the set of all fractional ideals which is prime to any $\p \in S$, and $I_K^S = I_K \cap J_K^S$. 
For any finite prime $\p$ of $K$, $K_{\p}$ denotes the localization of $K$ at $\p$, and $\O_{\p}$ denotes the integer ring of $K_{\p}$. 
The unit group $\O_{\p}^*$ is often denoted by $U_{\p}$. 
For any integer $m\geq 1$, let $U_{\p}^{(m)} = 1 + \p^m$ and $U_{\p}^{(0)} = U_{\p}$. 

For a ring $R$, $R^{\times}$ denotes $R \setminus \{0\}$. 

Fix a finite prime $\p$ of $K$ which is above a rational prime $p \in \Q$. Let
\begin{eqnarray*}
&&U = \{1\} \times \prod_{\q \neq \p} \O_{\q}^* \subset \Af^*, \\
&&G_{\p} = \Af^*/\overline{K^*_+}U.
\end{eqnarray*}
The group $G_{\p}$ plays an important role later. We can see that the group $J_K^{\p}$ can be considered as a subgroup of $G_{\p}$. 
The important point is that this is a dense subgroup. This follows from the following proposition: 

\begin{prop} \label{inverselimit}
We have
\begin{equation*}
G_{\p} = \lim_{\longleftarrow m} J_{K}^{\p}/P_K^{\p^m}.
\end{equation*}
\end{prop}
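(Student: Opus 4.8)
The plan is to present $G_{\p}$ as an inverse limit of finite quotients of $\Af^{*}$ and to match those quotients with the groups $J_{K}^{\p}/P_{K}^{\p^{m}}$. For $m\ge 1$ put $U_{m}=U_{\p}^{(m)}\times\prod_{\q\neq\p}\O_{\q}^{*}\subseteq\Af^{*}$, an open subgroup, so that $U_{1}\supseteq U_{2}\supseteq\cdots$ and $\bigcap_{m\ge 1}U_{m}=U$. The first ingredient is the idelic description of the ray class groups: for every $m$ there is an isomorphism
\begin{equation*}
J_{K}^{\p}/P_{K}^{\p^{m}}\;\xrightarrow{\ \sim\ }\;\Af^{*}/\bigl(K^{*}_{+}U_{m}\bigr),
\end{equation*}
compatible, as $m$ varies, with the evident quotient maps on both sides. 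To construct it, fix uniformizers $\pi_{\q}$ at the primes $\q\neq\p$ and, for $\a\in J_{K}^{\p}$, let $b_{\a}\in\Af^{*}$ have $\p$-component $1$ and $\q$-component $\pi_{\q}^{v_{\q}(\a)}$; then $\a\mapsto b_{\a}$ is a homomorphism $J_{K}^{\p}\to\Af^{*}$, and its composite with $\Af^{*}\to\Af^{*}/(K^{*}_{+}U_{m})$ is independent of the chosen uniformizers.

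I would then check, by a computation involving only the finite places, that this composite annihilates precisely $P_{K}^{\p^{m}}$, so that it descends to an injection $J_{K}^{\p}/P_{K}^{\p^{m}}\hookrightarrow\Af^{*}/(K^{*}_{+}U_{m})$. The one substantial point is surjectivity: given $a\in\Af^{*}$, weak approximation at the finite set of places $\{\p\}\cup\{\text{real places of }K\}$ yields $k\in K^{*}_{+}$ with $a_{\p}k^{-1}\in U_{\p}^{(m)}$, and then, for $\a=\prod_{\q\neq\p}\q^{v_{\q}(a_{\q})-v_{\q}(k)}\in J_{K}^{\p}$, one has $a\equiv b_{\a}$ modulo $K^{*}_{+}U_{m}$. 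Hence the map is an isomorphism (and in particular $\Af^{*}/(K^{*}_{+}U_{m})$ is finite, being a ray class group). Compatibility over $m$ is automatic, since everything is induced by the single homomorphism $\a\mapsto b_{\a}$ together with the inclusions $P_{K}^{\p^{m+1}}\subseteq P_{K}^{\p^{m}}$ and $U_{m+1}\subseteq U_{m}$.

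The second ingredient is the identification $\bigcap_{m\ge 1}(K^{*}_{+}U_{m})=\overline{K^{*}_{+}}\,U$. The quotient map $\Af^{*}\to\Af^{*}/U$ identifies $\Af^{*}/U$ topologically with $K_{\p}^{*}\times J_{K}^{\p}$ ($J_{K}^{\p}$ discrete), carrying the image of each $U_{m}$ onto $U_{\p}^{(m)}\times\{1\}$, and these subgroups form a neighborhood basis of the identity. Using the general fact that the closure of a subgroup $S$ equals $\bigcap_{m}SW_{m}$ for any neighborhood basis $\{W_{m}\}$ of the identity (here subgroups), with $S$ the image of $K^{*}_{+}$ and $W_{m}$ the image of $U_{m}$, and pulling back along the open map $\Af^{*}\to\Af^{*}/U$, one obtains $\bigcap_{m}(K^{*}_{+}U_{m})=\overline{K^{*}_{+}U}$; and $\overline{K^{*}_{+}U}=\overline{K^{*}_{+}}\,U$ because $\overline{K^{*}_{+}}$ is closed and $U$ is compact. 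Moreover the subgroups $K^{*}_{+}U_{m}$ are cofinal among all open subgroups of $\Af^{*}$ that contain $\overline{K^{*}_{+}}\,U$, since any such subgroup contains a basic neighborhood of $1$, hence some $U_{\p}^{(m)}\times\{1\}$, hence $K^{*}_{+}U_{m}$.

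Assembling the two: $G_{\p}=\Af^{*}/\overline{K^{*}_{+}}\,U$ carries the images of the $K^{*}_{+}U_{m}$ as a countable neighborhood basis of the identity consisting of open subgroups of finite index, and it is complete, being the quotient of the complete metrizable group $\Af^{*}$ by a closed subgroup. A complete group equipped with a countable neighborhood basis of the identity by open subgroups is canonically the inverse limit of the corresponding quotients, so
\begin{equation*}
G_{\p}\;\cong\;\lim_{\longleftarrow m}\,\Af^{*}/\bigl(K^{*}_{+}U_{m}\bigr)\;\cong\;\lim_{\longleftarrow m}\,J_{K}^{\p}/P_{K}^{\p^{m}},
\end{equation*}
the last isomorphism by the first ingredient. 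I expect the real obstacle to be that first ingredient: the naive ``associated ideal'' map $\Af^{*}\to J_{K}^{\p}$, $a\mapsto\prod_{\q\neq\p}\q^{v_{\q}(a_{\q})}$, has kernel strictly larger than $K^{*}_{+}U_{m}$ and does not induce the needed isomorphism, so one must route it through $\a\mapsto b_{\a}$ and genuinely invoke weak approximation for surjectivity; everything after that is formal topology.
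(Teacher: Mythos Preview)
Your argument is correct and reaches the same conclusion as the paper, but the organization is genuinely different. The paper builds the maps in the opposite direction: it defines $\varphi_m:G_{\p}\to J_K^{\p}/P_K^{\p^m}$ directly by $a\mapsto(ak)$ for a suitable $k\in K^*_+$ with $a_{\p}k\equiv 1\bmod\p^m$, identifies $\ker\varphi_m=K^*_+U_{\p}^{(m)}U/\overline{K^*_+}U$ (your $H_m$), and then proves $\bigcap_m\ker\varphi_m=1$ by a hands-on sequential compactness argument in $\Af^*$ (choose $a=k_ma_m$ with $a_m\in U_{\p}^{(m)}U$, extract an accumulation point of the $k_m$ in a compact set, and conclude $a\in\overline{K^*_+}U$). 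You instead first package the finite quotients as idelic ray class groups $J_K^{\p}/P_K^{\p^m}\cong\Af^*/(K^*_+U_m)$ via $\a\mapsto b_{\a}$ and weak approximation, and then obtain the crucial identity $\bigcap_m K^*_+U_m=\overline{K^*_+}U$ from the abstract closure formula $\overline{S}=\bigcap_m SW_m$ together with $\overline{K^*_+U}=\overline{K^*_+}\cdot U$. Both routes rest on the same two facts; yours is more structural and reusable, while the paper's is more self-contained. One small point to tighten: the passage from ``the $K^*_+U_m$ are cofinal among open subgroups containing $\overline{K^*_+}U$'' to ``the $H_m$ form a neighborhood basis of $1$ in $G_{\p}$'' tacitly uses that $\Af^*/U\cong K_{\p}^*\times J_K^{\p}$ already has a basis of open subgroups at $1$ (so any saturated open neighborhood of $\overline{K^*_+}U$ contains some $U_m\cdot\overline{K^*_+}$); alternatively, since $H_1$ is compact of finite index, $G_{\p}$ is compact, and then $\bigcap_mH_m=1$ alone forces the $H_m$ to be a neighborhood basis and the map to the inverse limit to be a homeomorphism, which is cleaner than invoking completeness of quotients.
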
 
\begin{proof}
First, we define a homomorphism $\varphi_m : G_{\p} \rightarrow J_K^{\p}/P_K^{\p^m}$. 
For $a \in \Af^*$, take $k\in K^*_+$ such that $a_{\p}k \equiv 1 \mod \p^m$, then define $\tilde{\varphi}_m(a)=(ak)$. 
Then it is independent of the choice of $k$, and $\tilde{\varphi}_m$ is a group homomorphism from $\Af^*$ to $J_K^{\p}/P_K^{\p^m}$. 
The homomorphism $\tilde{\varphi}_m$ is trivial on $\overline{K^*_+}U$ because $\overline{K^*_+}$ is contained in the open subgroup 
$K^*_+ U_{\p}^{(m)}U$. Hence, it induces a group homomorphism $\varphi_m : \Af^*/\overline{K^*_+}U \rightarrow J_K^{\p}/P_K^{\p^m}$. 
One can see that this homomorphism is open and continuous. 

Let us determine the kernel of $\varphi_m$. 
Clearly, $\ker \varphi_m$ contains the open subgroup $K^*_+U_{\p}^{(m)}U / \overline{K^*_+}U$. 
For the reverse inclusion, let $a \in \Af^*$ such that $\varphi_m(\bar{a})=1$. 
Here, $\bar{a}$ means the image of $a$ in $G$. 
Then we can take $k,l \in K^*_+$ such that $(ak)=(l)$ and $ak \equiv 1, l\equiv 1$ $\mod \p^m$. 
This means $akl^{-1} \in U_{p}^{(m)} U$. So we have $\ker \varphi_m = K^*_+U_{\p}^{(m)}U / \overline{K^*_+}U$.

The homomorphisms $\varphi_m$ commutes with the projective system $J_{K}^{\p}/P_K^{\p^m}$, so it induces a homomorphism 
$\displaystyle G_{\p} \rightarrow \lim_{\longleftarrow m} J_{K}^{\p}/P_K^{\p^m}$. 
It is automatically surjective. To see that it is injective, it suffices to check $\bigcap_m \ker \varphi_m = 1$. 
Take $a \in \Af^*$ such that $\bar{a} \in \bigcap_m \ker \varphi_m$, and we show that $\bar{a} = 1$. 
By multiplying an element of $\O_{K,+}^{\times}$, we may assume that $v_{\q}(a) \geq 0$ for any $\q$. 
Let
\begin{eqnarray*}
C = \{x \in \Af^*\ |\ v_{\q}(a)\geq v_{\q}(x)\geq 0 \mbox{ for any } \q \}. 
\end{eqnarray*}
Then $C$ is a compact open subset of $\Af^*$ containing $\hat{\O}^*_K$. 
By assumption, for any $m$ there exist $k_m \in K^*_+$ and $a_m \in U_{\p}^{(m)}U$ such that $a = k_m a_m$. 
Since we have $k_m = aa_m^{-1} \in C$, we can take an accumulation point $k \in \overline{K^*_+} \cap C$ and 
a subsequence $\{k_{m_j}\}$ of $\{k_m\}$ converging to $k$. 
Then the sequence $\{a_{m_j}\}$ converges to $ak^{-1}$. 
This implies $ak^{-1} \in \bigcap_m U_{\p}^{(m)} U = U$, so $a \in \overline{K^*_+}U$, which completes the proof. 
\end{proof}

Proposition \ref{inverselimit} gives an inductive limit structure of the $C^*$-algebra $C(G_{\p}) \rtimes J_K^{\p}$, 
which is useful to look into $K$-theory. 
The next lemma is used to examine the connecting maps of the inductive limit. 

\begin{lem} \label{order}
Let $f$ be the inertia degree of $\p$ over $p$. For any $m \geq 1$, We have 
$[P_K^{\p^m}: P_K^{\p^{m+1}}] = p^f$. 
\end{lem}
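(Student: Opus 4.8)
The plan is to compute the index $[P_K^{\p^m} : P_K^{\p^{m+1}}]$ by realizing the quotient as a quotient of unit groups of the local ring at $\p$. The key observation is that $P_K^{\p^m}$ is the group of principal ideals $(k)$ with $k \in K^*_+$ and $k \equiv 1 \bmod \p^m$, and that the condition only concerns the $\p$-adic behavior of $k$. I would first set up the surjective homomorphism $\psi : \{k \in K^*_+ : k \equiv 1 \bmod \p^m\} \to P_K^{\p^m}$ sending $k$ to $(k)$, whose kernel is $\O_{K,+}^{\times} \cap U_\p^{(m)}$, i.e. totally positive units congruent to $1$ modulo $\p^m$. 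A brief argument shows this kernel is independent of $m$ for $m$ large, but more cleanly: the point is that both $P_K^{\p^m}$ and $P_K^{\p^{m+1}}$ have the ``same'' kernel issue, so these units cancel when we take the index.

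\medskip

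More precisely, the plan is as follows. Write $V^{(m)} = \{k \in K^*_+ : k \equiv 1 \bmod \p^m\}$, so $P_K^{\p^m} = V^{(m)}/(\O_{K,+}^{\times} \cap V^{(m)})$ as groups (identifying a principal ideal with its generator modulo units). Then
\begin{equation*}
[P_K^{\p^m} : P_K^{\p^{m+1}}] = \frac{[V^{(m)} : V^{(m+1)}]}{[\O_{K,+}^{\times} \cap V^{(m)} : \O_{K,+}^{\times} \cap V^{(m+1)}]}.
\end{equation*}
For the numerator, I would use weak approximation: the reduction map $V^{(m)} \to U_\p^{(m)}/U_\p^{(m+1)}$ is surjective (any local unit congruent to $1$ mod $\p^m$ can be approximated by a global totally positive element that is congruent to it mod $\p^{m+1}$ and a unit at the finitely many other relevant primes, in particular one can arrange $k \equiv 1 \bmod \p^m$ globally), with kernel exactly $V^{(m+1)}$; hence $[V^{(m)}:V^{(m+1)}] = |U_\p^{(m)}/U_\p^{(m+1)}| = |\O_K/\p| = p^f$, the last equality being the standard fact that $U_\p^{(m)}/U_\p^{(m+1)} \cong \O_\p/\p \cong \mathbb{F}_{p^f}$ via $1 + x \mapsto x$.

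\medskip

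For the denominator, the claim is that $\O_{K,+}^{\times} \cap U_\p^{(m)} = \O_{K,+}^{\times} \cap U_\p^{(m+1)}$, i.e. a totally positive global unit congruent to $1$ modulo $\p^m$ is automatically congruent to $1$ modulo $\p^{m+1}$ — which is false in general for a single $m$, so this is where care is needed. The correct statement is only that the two subgroups have equal index contributions in a limiting sense; the honest route is to note that $\O_{K,+}^{\times}$ is a finitely generated group, so its image in $U_\p = \varprojlim U_\p/U_\p^{(m)}$ is a compact group which is topologically finitely generated, hence for $m$ large the successive quotients stabilize — but Lemma \ref{order} claims equality for \emph{all} $m \geq 1$. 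I expect the main obstacle is exactly this: one must show the denominator is trivial for every $m \geq 1$, which forces the argument to instead avoid units altogether. The clean fix is to not pass through generators: instead, directly construct a surjection $P_K^{\p^m} \to U_\p^{(m)}/U_\p^{(m+1)}$ by $(k) \mapsto k \bmod U_\p^{(m+1)}$ (well-defined precisely because changing the generator $k$ by a totally positive unit $u$ with $(u) = (1)$ does not change $k$ modulo... wait, $u$ need not be in $U_\p^{(m+1)}$) — so even this needs the unit vanishing. The resolution, which I would adopt, is to observe that $P_K^{\p^m}$ sits in the exact sequence coming from $\Af^*$ as in Proposition \ref{inverselimit}: the kernel of $\varphi_{m+1}/\varphi_m$ type map is computed there as $K_+^* U_\p^{(m)} U / K_+^* U_\p^{(m+1)} U$, whose size is $|U_\p^{(m)}U : (U_\p^{(m)}U \cap K_+^* U_\p^{(m+1)}U)| = |U_\p^{(m)}/U_\p^{(m+1)}| = p^f$ once one checks $K_+^* \cap U_\p^{(m)}U \subseteq U_\p^{(m+1)}U$, and \emph{this} intersection condition is where the $m \geq 1$ hypothesis genuinely enters. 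So concretely: I would deduce Lemma \ref{order} from Proposition \ref{inverselimit} by comparing the kernels $\ker\varphi_m$ and $\ker\varphi_{m+1}$, since $J_K^\p/P_K^{\p^m} = G_\p/\ker\varphi_m$ gives $[P_K^{\p^m} : P_K^{\p^{m+1}}] = [\ker\varphi_m : \ker\varphi_{m+1}] = [K_+^* U_\p^{(m)} U : K_+^* U_\p^{(m+1)} U]$, and this last index equals $[U_\p^{(m)} : U_\p^{(m+1)} \cdot (U_\p^{(m)} \cap K_+^* U)] = [U_\p^{(m)} : U_\p^{(m+1)}] = p^f$ once one verifies $U_\p^{(m)} \cap K_+^* U \subseteq U_\p^{(m+1)}$, i.e. that a totally positive global number which is a unit away from $\p$ and lies in $1 + \p^m$ but defines the trivial ideal must lie in $1 + \p^{m+1}$ — this is the crux and should follow because such a number is a totally positive unit, and the relevant statement is that $\O_{K,+}^\times \cap U_\p^{(1)}$ is torsion-free while the graded pieces are $p$-groups, forcing the needed containment when $m \geq 1$; I would flesh this last point out by a direct filtration argument on units.
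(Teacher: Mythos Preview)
Your diagnosis is exactly right: the whole difficulty is the group of totally positive global units $E_+ := \O_K^* \cap K_+^*$ lying in $U_\p^{(m)}$, and your final proposed fix does not work. The assertion that ``$\O_{K,+}^\times \cap U_\p^{(1)}$ is torsion-free while the graded pieces are $p$-groups, forcing the needed containment'' is a non sequitur --- a torsion-free group can map nontrivially to a finite $p$-group (think $\Z \twoheadrightarrow \Z/p$). Concretely, take $K=\Q(\sqrt2)$ and $\p=(\sqrt2)$, so $p=2$, $f=1$. The totally positive unit $\epsilon^2 = 3+2\sqrt2$ has $v_\p(\epsilon^2-1)=v_\p\bigl(2(1+\sqrt2)\bigr)=2$, so $\epsilon^2\in U_\p^{(2)}\setminus U_\p^{(3)}$; the containment $E_+\cap U_\p^{(m)}\subseteq U_\p^{(m+1)}$ that you need therefore fails at $m=2$.

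Worse, this shows the lemma is \emph{false} as stated. In $(\O_K/\p^3)^*$ the subgroup $U_\p^{(2)}/U_\p^{(3)}$ is $\{1,3\}$, and $\epsilon^2\equiv 3\bmod\p^3$; hence for any $(k)\in P_K^{\p^2}$ with $k\equiv 3\bmod\p^3$ one has $\epsilon^2 k\equiv 1\bmod\p^3$, giving $(k)\in P_K^{\p^3}$. Thus $P_K^{\p^2}=P_K^{\p^3}$ and $[P_K^{\p^2}:P_K^{\p^3}]=1\neq 2=p^f$. The paper's own proof contains precisely the gap you located: it asserts that $P_K^{\p^m}/P_K^{\p^{m+1}}$ ``can be embedded into'' $U_\p^{(m)}/U_\p^{(m+1)}$, then proves surjectivity and calls injectivity ``clear'' --- but the intended map $(k)\mapsto k$ is not even well-defined, and the example shows no such embedding can exist. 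What \emph{is} true is that there is a surjection $U_\p^{(m)}/U_\p^{(m+1)}\twoheadrightarrow P_K^{\p^m}/P_K^{\p^{m+1}}$ (this is what the paper's surjectivity argument actually builds), so the index is a power of $p$ dividing $p^f$; together with the fact that $h_m=[J_K^\p:P_K^{\p^m}]$ has unbounded $p$-part, this is enough to salvage $\bigcup_m h_m^{-1}\Z\cong\Z[1/p]$ in Theorem~\ref{main2}, but the lemma itself needs to be weakened.
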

\begin{proof}
The group $P_K^{\p^m}/P_K^{\p^{m+1}}$ can be embedded into $U_{\p}^{(m)} / U_{\p}^{(m+1)}$. 
We show that this map is an isomorphism. 
 Let $a \in \p^m$, and take $k \in \O_{K,+}^{\times}$ such that $a\equiv k \mod \p^{m+1}$. 
Then $l=1+k$ is in $P_K^{\p^m}$, and $l(1+a)^{-1} \in U_{\p}^{(m+1)}$. This implies the surjectivity, and the injectivity is clear. 

The group 
$U_{\p}^{(m)} / U_{\p}^{(m+1)}$ is isomorphic to the additive group $\kappa_{\p}$, where $\kappa_{\p}$ is the residual field $\O_K / \p$ 
(see \cite[Chapter II, Proposition 3.10]{N}). 
The order of $\kappa_{\p}$ equals to $p^f$, so we have $[P_K^{\p^m}: P_K^{\p^{m+1}}] = p^f$. 
\end{proof}

\subsection{Primitive ideals revisited}
In \cite[Section 3.3]{T}, the primitive ideal structure of $A_K$ is formally determined. 
$\Prim A_K$ has a bundle structure over $2^{\P_K}$ with torus fibers. 
Although the concrete form of fibers has not been determined yet except the case of $\Q$ or imaginary quadratic fields, 
the formal description is useful for our purpose. 

\begin{dfn}[\cite{T}]
For a subset $S$ of $\P_K$, define a subgroup $\Gamma_S$ of $P_K^1$ by 
\begin{equation*}
\Gamma_S = \{(a)\ |\ a \in \overline{K^*_+} \subset \Af^*, a_{\p}=1 \mbox{ for } \p \not\in S \}. 
\end{equation*} 
\end{dfn}

\begin{thm}[\cite{T}] The following holds: 
\begin{enumerate}
\item $\Prim A_K$ is identified with $\displaystyle \bigcup_{S \subset \P_K} \hat{\Gamma}_S$, where $\hat{\Gamma}_S$ is the Pontrjagin dual of $\Gamma_S$. 
Let $P_{S,\gamma}$ be the ideal which corresponds to $\gamma \in \hat{\Gamma}_S$. Then we have 
\begin{equation*}
P_{S,\gamma} = \ker ((\Ind_{\Gamma_S}^{J_K} (\mathrm{ev}_x \rtimes \gamma))|_{A_K}),
\end{equation*}
where $x=[\rho,\alpha] \in X_K$ which satisfies that $\rho_{\p}= 0$ if and only if $\p \in S$. 
\item The canonical map
\begin{equation*}
2^{\P_K} \times \hat{J}_K \rightarrow \bigcup_{S \subset \P} \hat{\Gamma}_S = \mathrm{Prim} A_K,
\ (S, \gamma) \mapsto \gamma|_{\Gamma_S} \in \hat{\Gamma}_S
\end{equation*}
is an open continuous surjection, where the topology of $2^{\P_K}$ is power-cofinite topology. 
\end{enumerate}
\end{thm}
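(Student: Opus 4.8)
The plan is to reduce everything to the group crossed product $\tilde{A}_K = C_0(X_K)\rtimes J_K$. This is legitimate because $A_K = 1_{Y_K}\tilde{A}_K1_{Y_K}$ with $1_{Y_K}$ a full projection, so $\Prim A_K$ and $\Prim\tilde{A}_K$ are homeomorphic, and restricting representations to the corner then yields part (1) directly. Since $J_K$ is free abelian on $\P_K$, hence countable and amenable, the Gootman--Rosenberg theorem (the generalized Effros--Hahn conjecture) applies: every primitive ideal of $\tilde{A}_K$ has the form $\ker\bigl(\Ind_{J_{K,x}}^{J_K}(\mathrm{ev}_x\rtimes\gamma)\bigr)$ for some $x\in X_K$ and $\gamma\in\widehat{J_{K,x}}$, where $J_{K,x}$ is the stabilizer of $x$; and standard transformation-group-$C^*$-algebra theory tells us when two such ideals coincide, in terms of the quasi-orbit of $x$ and the character $\gamma$. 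So the proof breaks into (a) computing stabilizers, (b) computing quasi-orbits, and (c) identifying the topology.

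For (a): write $x=[\rho,\alpha]$ and $S = \{\p\in\P_K : \rho_\p = 0\}$. Because the $\GKab$-component carries a \emph{free} (translation) action, an ideal $\a$ can fix $x$ only if some $\hat{\O}_K^*$-twist of a generating id\`ele lies in $\ker(\Af^*\to\GKab) = \overline{K^*_+}$; and then the equation $\rho a = \rho u$ with $u\in\hat{\O}_K^*$, together with each $K_{\p}$ being a field, forces that element to be trivial at every $\p\notin S$. Hence $\a\in\Gamma_S$. Conversely every $\a\in\Gamma_S$ fixes $x$, since its totally positive generator is trivial in $\GKab$ and leaves $\rho$ unchanged (it is $1$ at every prime where $\rho$ is nonzero). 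Therefore $J_{K,x} = \Gamma_S$, depending on $x$ only through its support $S$.

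For (b): since $S(\a\cdot x) = S(x)$ for all $\a$, and a coordinate equal to $0$ stays $0$ under limits, $\overline{J_K\cdot x}\subseteq\{[\rho',\alpha'] : \rho'_\p = 0 \text{ for all }\p\in S\}$; in particular $S$ is recoverable from the quasi-orbit as the intersection of the supports of its points. For the reverse inclusion one drives, for each $\p\in S$, the $\p$-coordinate to $0$ by multiplying by high powers of $\p$ (using compactness of $\GKab$ to take a limit in the Galois coordinate) and adjusts the remaining coordinates by a density argument parallel to Proposition \ref{inverselimit}. Thus $\overline{J_K\cdot x}$ depends only on $S$, distinct $S$ give distinct quasi-orbits, and the quasi-orbit space of $(X_K,J_K)$ is identified with $2^{\P_K}$. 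Combining (a) and (b), $\Prim\tilde{A}_K$ is in bijection as a set with $\bigcup_{S\subseteq\P_K}\hat{\Gamma}_S$ via $(S,\gamma)\mapsto P_{S,\gamma} := \ker\bigl(\Ind_{\Gamma_S}^{J_K}(\mathrm{ev}_x\rtimes\gamma)\bigr)$ for any $x$ of support $S$; here one also checks that distinct characters in a fixed $\hat{\Gamma}_S$ give distinct ideals, which is exactly the statement that $J_K/\Gamma_S$ acts essentially freely on the stratum $\{x : S(x) = S\}$.

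It remains to pin down the topology, which I expect to be the main obstacle. The restriction maps $\hat{J}_K\to\hat{\Gamma}_S$ are surjective since Pontrjagin duality is exact on short exact sequences of locally compact abelian groups, so $(S,\gamma)\mapsto\gamma|_{\Gamma_S}$ is a well-defined surjection $2^{\P_K}\times\hat{J}_K\to\bigcup_S\hat{\Gamma}_S$. Its continuity and openness should follow from continuity of Rieffel induction for the hull--kernel topology together with a direct description of how the ideals $P_{S,\gamma}$ are nested. The delicate point is that $\overline{J_K\cdot x}$ is \emph{decreasing} in $S$ (a larger support means more conditions $\rho_\p = 0$, hence a smaller orbit closure), so the strata $\hat{\Gamma}_S$ get glued into a genuinely non-Hausdorff space in which a net $P_{S_n,\gamma_n}$ may converge to $P_{S,\gamma}$ with $S_n\subsetneq S$, and all of these degenerations must be controlled simultaneously. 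Matching the resulting hull--kernel topology with the product of the power-cofinite topology on $2^{\P_K}$ and the usual topology on $\hat{J}_K$ --- and thereby showing that $2^{\P_K}\times\hat{J}_K\to\Prim A_K$ is the claimed open continuous surjection --- is where the real work lies; everything upstream is a finite sequence of idelic manipulations plus the density statement.
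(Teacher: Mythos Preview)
The paper does not actually prove this theorem: it is quoted verbatim from the author's earlier work \cite{T} and stated without proof here, with only the remark that the isotropy group of $x$ equals $\Gamma_S$ added afterwards. So there is no ``paper's own proof'' to compare against.

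That said, your outline is precisely the standard route one expects for a result of this type, and almost certainly matches what is done in \cite{T}: pass to the Morita-equivalent group crossed product $\tilde{A}_K=C_0(X_K)\rtimes J_K$, invoke Gootman--Rosenberg/Effros--Hahn for the amenable group $J_K\cong\bigoplus_{\p}\Z$, compute isotropy groups (your step (a), which the paper confirms in the sentence following the theorem), compute quasi-orbits (your step (b); the paper repeatedly cites \cite[Lemma 3.12]{T} for exactly this orbit-closure computation), and then identify the hull--kernel topology. Your sketches of (a) and (b) are correct in outline. The one place where you are honest that the argument is incomplete --- matching the hull--kernel topology with the power-cofinite topology on $2^{\P_K}$ times the compact topology on $\hat{J}_K$, and proving openness of the map --- is indeed the part that requires the most care; Williams' general theory of the topology on $\Prim(C_0(X)\rtimes G)$ for abelian $G$ (in terms of the quasi-orbit map and the stabilizer map) is what one would use, and one has to check that the stabilizer map $x\mapsto\Gamma_{S(x)}$ is continuous into the Fell topology on subgroups of $J_K$. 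None of this is deep, but it is not a two-line verification either, so your assessment that ``the real work lies'' there is accurate.
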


Note that the isotropy group of $x$ in (1) is $\Gamma_S$. 

\begin{rmk} \label{rmk}
The explicit form of the representation $(\Ind_{\Gamma}^{J_K} (\mathrm{ev}_x \rtimes \gamma))|_{A_K}$ can be determined. 
Let $\tilde{\H}_{S,\gamma} = \ell^2(J_K/\Gamma_{S})$. 
Take a lift of $\gamma$ on $\hat{J}_K$, which is still denoted by $\gamma$. 
Define a representation $\tilde{\pi}_{S,\gamma}$ of $\tilde{A}_K$ by 
\begin{equation*}
\tilde{\pi}_{S,\gamma}(f)\xi_{\bar{t}} = f(tx)\xi_{\bar{t}},\ \tilde{\pi}_{S,\gamma}(u_s)\xi_{\bar{t}} = \gamma(s) \xi_{\bar{st}}, 
\end{equation*}
where $s,t \in J_K$ and $f \in C_0(X_K)$. 
Up to unitary equivalence, $\pi_{S,\gamma}$ is independent of the choice of a lift of $\gamma$. 
Then 
$(\pi_{S,\gamma}, \H_{S,\gamma}) = (\tilde{\pi}_{S,\gamma}|_{A_K}, \tilde{\pi}_{S,\gamma}(1_{Y_K})\tilde{\H}_{S,\gamma})$
is exactly the above irreducible representation. 
Moreover, we have $\tilde{\pi}_{S,\gamma}(1_{Y_K})\tilde{\H}_{S,\gamma} = \ell^2(J_K^{S^c}/\Gamma_S \times I_K^S)$. 
Note that $\pi_{S,\gamma}(v_{\p})$ is a unitary if $\p \not\in S$. 

Note that the unitary equivalence class of the representation $\pi_{S,\gamma}$ may change if we change $x$ to another one 
--- only the kernel of $\pi_{S,\gamma}$ is independent of the choice of $x$. 
\end{rmk}

\begin{prop} \label{latticestr}
Let $S,T$ be two subsets of $\P_K$ and let $\gamma \in \hat{J}_K$. 
Then we have $P_{S,\gamma} \subset P_{T,\gamma}$ if and only if $S\subset T$. 
\end{prop}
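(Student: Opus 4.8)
The plan is to establish the two implications separately. For ``$S\subseteq T\Rightarrow P_{S,\gamma}\subseteq P_{T,\gamma}$'' I would invoke the second assertion of the theorem of \cite{T} recalled above: the canonical map $2^{\P_K}\times\hat{J}_K\to\Prim A_K$, $(S,\gamma)\mapsto P_{S,\gamma}$, is continuous, where $2^{\P_K}$ carries the power-cofinite topology, in which the closure of a singleton $\{S\}$ is precisely $\{T : S\subseteq T\}$. Since $\hat{J}_K$ is Hausdorff, for a fixed $\gamma$ the point $(T,\gamma)$ lies in the closure of $\{(S,\gamma)\}$ whenever $S\subseteq T$, so by continuity $P_{T,\gamma}$ lies in the closure of $\{P_{S,\gamma}\}$ in $\Prim A_K$; as the closure of a point $P$ in the hull--kernel topology is $\{Q\in\Prim A_K : P\subseteq Q\}$, this gives $P_{S,\gamma}\subseteq P_{T,\gamma}$. (Alternatively, one can check directly that $x_T$ lies in the closure of the $J_K$-orbit of $x_S$ --- multiply by high powers of the primes in $T\setminus S$ --- which already forces $\ker\pi_{S,\gamma}\subseteq\ker\pi_{T,\gamma}$.)

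For the converse I would argue by contraposition. Assume $S\not\subseteq T$, fix a prime $\p\in S\setminus T$, and set $a=1_{Y_K}-v_{\p}v_{\p}^*\in A_K$. In the model of Remark \ref{rmk}, $\pi_{S,\gamma}$ acts on $\ell^2(J_K^{S^c}/\Gamma_S\times I_K^S)$ and $\pi_{S,\gamma}(v_{\p})\xi_{\bar{t}}$ equals $\xi_{\overline{\p t}}$ up to a unimodular scalar. Since $\p\in S$, multiplication by $\p$ is a bijection of the group $J_K^{S^c}/\Gamma_S$, so $\pi_{S,\gamma}(v_{\p})$ is a unitary, hence $\pi_{S,\gamma}(a)=0$, i.e.\ $a\in\ker\pi_{S,\gamma}=P_{S,\gamma}$. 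On the other hand $\pi_{T,\gamma}$ acts on $\ell^2(J_K^{T^c}/\Gamma_T\times I_K^T)$, and since $\p\notin T$ the operator $\pi_{T,\gamma}(v_{\p})$ is, up to a unimodular scalar, multiplication by $\p$ on the semigroup $I_K^T$ of integral ideals prime to $T$, which is injective but not surjective (the unit ideal is not divisible by $\p$); hence $\pi_{T,\gamma}(v_{\p})$ is a proper isometry, $\pi_{T,\gamma}(a)\neq 0$, and $a\notin P_{T,\gamma}$. Thus $P_{S,\gamma}\not\subseteq P_{T,\gamma}$.

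I do not expect a deep obstacle: the statement is essentially a compatibility between the bundle structure of $\Prim A_K$ over $2^{\P_K}$ and the pattern ``$\pi_{S,\gamma}(v_{\p})$ is a unitary if and only if $\p\in S$'' --- the point being that for $\p\in S$ the generator $v_{\p}$ translates the group factor $J_K^{S^c}/\Gamma_S$ of the index set, whereas for $\p\notin S$ it translates the semigroup factor $I_K^S$. The only real care is bookkeeping: matching the orientation of the power-cofinite order on $2^{\P_K}$ with that of the inclusion of ideals, and reading the unitarity pattern off Remark \ref{rmk} correctly. No input beyond the quoted description of $\Prim A_K$ and the explicit form of the representations $\pi_{S,\gamma}$ is required.
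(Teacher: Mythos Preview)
Your argument is correct, and both halves differ from the paper's. For the implication $S\subset T\Rightarrow P_{S,\gamma}\subset P_{T,\gamma}$ the paper works by hand: it realises $\pi_{T,\gamma}$ (up to a further quotient) as a strong limit of unitary conjugates of $\pi_{S,\gamma}$, then for $\gamma=1$ factors through $C_0(\overline{J_Kx})\rtimes(J_K/\Gamma_S)$ and invokes a faithfulness lemma of Cuntz--Echterhoff--Li, finally reducing general $\gamma$ to this case. Your route---reading the inclusion off the continuity of $2^{\P_K}\times\hat J_K\to\Prim A_K$ and the identification of point closures in the power-cofinite topology---is shorter and more conceptual, at the cost of importing the topological description of $\Prim A_K$ from \cite{T} as a black box. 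For the converse, the paper intersects with the commutative subalgebra $C(Y_K)$ and compares orbit closures via \cite[Lemma~3.12]{T}, whereas you exhibit the explicit witness $1-v_\p v_\p^*$ for $\p\in S\setminus T$; this is more elementary and makes the mechanism transparent. One remark: your unitarity pattern ``$\pi_{S,\gamma}(v_\p)$ is unitary iff $\p\in S$'' is the correct reading of the Hilbert space decomposition $\ell^2(J_K^{S^c}/\Gamma_S\times I_K^S)$ in Remark~\ref{rmk} (the last sentence of that remark appears to have the condition reversed), so your derivation of it directly from the index set, rather than quoting the sentence, is the right move.
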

\begin{proof}
Take $x=[\rho,\alpha],y=[\sigma,\beta] \in Y_K$ satisfying that $\rho_{\p}=0$ 
if and only if $\p \in S$, and $\sigma_{\p}=0$ if and only if $\p \in T$. 

Suppose $P_{S,\gamma} \subset P_{T,\gamma}$. 
Since $C_0((\overline{J_Kx})^{c} \cap Y_K) \subset P_{S,\gamma}$, any function of $C(Y_K)$ which vanishes on $\overline{J_Kx} \cap Y_K$ 
also vanishes on $\overline{J_Ky} \cap Y_K$. Hence $\overline{J_Kx} \cap Y_K \supset \overline{J_Ky} \cap Y_K$, which is equivalent to $S\subset T$ 
by \cite[Lemma 3.12]{T}. 

Suppose $S\subset T$. By \cite[Lemma 3.12]{T}, there exists a sequence $\{w_n\}_{n \in \N} \subset J_K$ such that $w_n x$ converges to $y$ in $X_K$. 
For any $n \in \N$, let $\tilde{\pi}_n$ be the representation of $\tilde{A}_K$ on $\ell^2(J_K/\Gamma_S)$ determined by 
\begin{equation*}
\tilde{\pi}_n(f)\xi_{\bar{t}} = f(tw_nx)\xi_{\bar{t}},\ \tilde{\pi}_n(u_s)\xi_{\bar{t}} = \gamma(s) \xi_{\bar{st}}, 
\end{equation*}
where $s,t \in J_K$ and $f \in C_0(X_K)$. Then we can see that $\tilde{\pi}_n$ is unitarily equivalent to $\tilde{\pi}_{S,\gamma}$. 
For any $a \in \tilde{A}_K$, we can see that there exists a limit of $\tilde{\pi}_n(a)$ with respect to the strong operator topology, 
which is denoted by $\pi(a)$. The representation $\pi$ of $\tilde{A}_K$ is determined by 
\begin{equation*}
\pi(f)\xi_{\bar{t}} = f(ty)\xi_{\bar{t}},\ \pi(u_s)\xi_{\bar{t}} = \gamma(s) \xi_{\bar{st}}, 
\end{equation*}
for $s,t \in J_K$ and $f \in C_0(X_K)$. If $a \in P_{S,\gamma}$, then we have $\pi(a)=0$ by definition. 
Hence it suffices to show that $\ker \pi \cap A_K$ is contained in $P_{T,\gamma}$. 

Let $\rho = \tilde{\pi}_{T,\gamma}$. First, we consider the case of $\gamma = 1$. 
By the universality of group crossed products, we have a quotient map $\phi:C_0(X_K) \rtimes J_K \rightarrow C_0(\overline{J_Kx}) \rtimes (J_K/\Gamma_S)$. 
Then the representations $\pi,\rho$ factors through $\phi$, i.e., there exist representations $\pi', \rho'$ of $C_0(\overline{J_Kx}) \rtimes (J_K/\Gamma_S)$ 
on $\BB(\ell^2(J_K/\Gamma_S))$ such that $\pi = \pi' \circ \phi$ and $\rho = \rho' \circ \phi$. 
We can see that $\pi'$ is in fact a faithful representation (cf.~\cite[Lemma 2.5.1]{CEL}). Hence $\ker \pi = \ker \phi \subset \ker \rho$. 

Next, we consider general cases. In fact, the $C^*$-algebras $\pi(\tilde{A}_K), \rho(\tilde{A}_K)$ are independent of the choice of $\gamma$. 
Hence we have a quotient map 
$\psi: \pi(\tilde{A}_K)\rightarrow \rho(\tilde{A}_K)$ obtained from the case of $\gamma = 1$. 
We can directly check that $\psi \circ \pi = \rho$ for general $\gamma$. Therefore, $\ker \pi \subset \ker \rho$ holds for general $\gamma$. 

\end{proof}

The proof of the following lemma is essentially the same as \cite[Proposition 3.8]{T}. 
\begin{lem}
Let $S$ be a subset of $\P_K$, and let
\begin{eqnarray*}
&&Y_K^S = \{ x=[\rho,\alpha] \in Y_K\ |\ \rho_{\p}=0 \mbox{ for any } \p \in S \}, \\
&&P_S = C_0((Y_K^S)^c) \rtimes I_K.
\end{eqnarray*}
Then we have 
\begin{equation*}
\bigcap_{\gamma \in \hat{\Gamma}_S} P_{S,\gamma} = P_S.
\end{equation*}
In particular, $P_S$ is a primitive ideal if and only if $\Gamma_S=1$.  
\end{lem}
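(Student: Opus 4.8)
The plan is to compute $\bigcap_{\gamma \in \hat\Gamma_S} P_{S,\gamma}$ directly using the explicit form of the representations $\pi_{S,\gamma}$ recorded in Remark~\ref{rmk}, and then to identify the intersection with the ideal $P_S$ generated by functions vanishing on $Y_K^S$. First I would observe that each $P_{S,\gamma}$ contains $C_0((Y_K^S)^c)\rtimes I_K$: indeed, with $x=[\rho,\alpha]$ chosen so that $\rho_\p = 0$ exactly when $\p\in S$, one checks that $\overline{J_K x}\cap Y_K \subset Y_K^S$, so every $f\in C_0(X_K)$ supported off $Y_K^S$ satisfies $f(tx)=0$ for all $t\in J_K$, hence $\pi_{S,\gamma}(f)=0$; closing up under the $I_K$-action gives $P_S \subset P_{S,\gamma}$ for every $\gamma$, and therefore $P_S \subset \bigcap_\gamma P_{S,\gamma}$.

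For the reverse inclusion, I would pass to the quotient $A_K/P_S$, which by construction is (a full corner of) $C_0(Y_K^S)\rtimes I_K$ — this is the analogue of the reduction carried out in \cite[Proposition 3.8]{T}, and the cited proof transfers essentially verbatim. On this quotient the family $\{\pi_{S,\gamma}\}_{\gamma\in\hat\Gamma_S}$ descends to a family of representations, and the claim becomes that this family is jointly faithful, i.e. $\bigcap_\gamma \ker(\overline{\pi_{S,\gamma}}) = 0$. The key point is that $\Gamma_S$ is exactly the isotropy group of $x$ in $Y_K^S$ (as noted after the theorem), so $\pi_{S,\gamma}$ is the representation induced from the character $\gamma$ of $\Gamma_S$ at the point $x$; letting $\gamma$ range over all of $\hat\Gamma_S$ and integrating (equivalently, taking a direct integral over $\hat\Gamma_S$) recovers the regular representation of the transformation groupoid $Y_K^S\rtimes J_K$ restricted to the orbit of $x$, which is faithful on $C_0(\overline{J_K x}\cap Y_K)\rtimes (J_K/\Gamma_S)$ by the standard argument for transformation-group $C^*$-algebras (cf.~\cite[Lemma 2.5.1]{CEL}, used already in Proposition~\ref{latticestr}). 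Since the orbit closures $\overline{J_K x}\cap Y_K$ as $x$ ranges over the relevant points cover $Y_K^S$, a routine gluing over these pieces yields faithfulness on all of $C_0(Y_K^S)\rtimes I_K$, giving $\bigcap_\gamma P_{S,\gamma}\subset P_S$.

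Finally, for the last sentence: if $\Gamma_S = 1$ then $\hat\Gamma_S$ is trivial, the intersection is the single ideal $P_{S,1}=P_S$, and $P_S = P_{S,1}$ is primitive. Conversely, if $\Gamma_S\neq 1$ then $\hat\Gamma_S$ has at least two elements, and by Proposition~\ref{latticestr} (applied with $S=T$) the distinct $P_{S,\gamma}$ are genuinely distinct primitive ideals with the same "$S$-part"; since a primitive ideal in a separable $C^*$-algebra is prime, $P_S = \bigcap_\gamma P_{S,\gamma}$ cannot be prime (it is a proper intersection of incomparable primes, e.g. $P_{S,\gamma_1}\cap P_{S,\gamma_2}$ with $\gamma_1\neq\gamma_2$ is not contained in either factor), so $P_S$ is not primitive.

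The main obstacle I anticipate is the faithfulness step — verifying carefully that the direct integral of $\{\pi_{S,\gamma}\}_{\gamma\in\hat\Gamma_S}$ is faithful on the quotient, and handling the dependence of the orbit closure $\overline{J_K x}\cap Y_K$ on the choice of $x$ with $\mathrm{supp}$-condition $S$. This is exactly where one leans on the groupoid picture and on the compatibility established in Remark~\ref{rmk} and Proposition~\ref{latticestr}; once the reduction to $C_0(Y_K^S)\rtimes I_K$ is in place, the rest is bookkeeping parallel to \cite[Proposition 3.8]{T}.
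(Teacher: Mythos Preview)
Your overall strategy coincides with the paper's: show $P_S\subset\bigcap_\gamma P_{S,\gamma}$ directly, then prove that the family $\{\pi_{S,\gamma}\}_\gamma$ is jointly faithful on $A_K/P_S\cong C(Y_K^S)\rtimes(J_K^{S^c}\times I_K^S)$. The difference is in how the faithfulness step is carried out. The paper bundles the representations (indexed over all of $\hat{J}_K$, not just $\hat\Gamma_S$) into a single map $\Phi:A_K/P_S\to C(\hat{J}_K)\otimes B$, notes the explicit formula $\Phi(fv_{\a})=\chi_{\a}\otimes\pi_{S,1}(fv_{\a})$, and then checks injectivity via a commutative square intertwining the canonical conditional expectation $E$ with integration against Haar measure on $\hat{J}_K$; the bottom arrow $C(Y_K^S)\to B$ is injective because a single $J_K$-orbit is already dense in $Y_K^S$ by \cite[Lemma~3.12]{T}. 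Your direct-integral/groupoid-regular-representation argument is a valid alternative and morally the same computation, but note that your ``gluing over varying $x$'' is unnecessary and slightly misleading: for one $x$ with zero-set exactly $S$ one already has $\overline{J_Kx}\cap Y_K=Y_K^S$, which is precisely the content of \cite[Lemma~3.12]{T} used in the paper. Finally, for the ``in particular'' clause you invoke incomparability of $P_{S,\gamma_1}$ and $P_{S,\gamma_2}$, which Proposition~\ref{latticestr} does not literally assert (it fixes $\gamma$ and varies $S$); the paper leaves this clause without proof too, but a clean way to close the gap is to observe that $P_S\cap C(Y_K)=C_0((Y_K^S)^c)$ forces any primitive ideal equal to $P_S$ to sit in the fiber over $S$, where the parametrization by $\hat\Gamma_S$ is a bijection.
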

\begin{proof}
Here, we consider that representations $\pi_{S,\gamma}$ in Remark \ref{rmk} are defined for any $\gamma \in \hat{J}_K$ 
(some of them are mutually unitarily equivalent, but we distinguish them). 

By \cite[Lemma 3.12]{T}, $P_S$ is contained in $\ker \pi_{S,\gamma} = P_{S,\gamma}$ for any $\gamma$. 
Let $B= \BB (\pi_{S,\gamma}(1_{Y_K})\ell^2(J_K/\Gamma_{S}))$. 
Then the image of the homomorphism 
\begin{equation*}
\prod_{\gamma \in \hat{J}_K} \pi_{S,\gamma} : A_K/P_S \rightarrow \prod_{\gamma \in \hat{J}_K} B. 
\end{equation*}
is actually contained in $C(\hat{J}_K,B)$. Let $\Phi:A_K/P_S \rightarrow C(\hat{J}_K,B)$ be the restriction of that map. 
Since $\ker \Phi = \bigcap_{\gamma} \ker \pi_{S,\gamma} / P_S$, it suffices to show that $\Phi$ is injective. 

We have  
$A_K/P_S \cong C(Y_K^S) \rtimes (J_K^{S^c} \times I_K^S)$, 
and 
$\Phi(fv_{\a}) = \chi_{\a} \otimes \pi_{S,1}(fv_{\a})$ for $f \in C(Y_K^S), \a \in J_K^{S^c} \times I_K^S$, 
where $\chi_{\a}$ is the character on $\hat{J}_K$ corresponding to $\a$. 
So we have the following commutative diagram: 
\begin{eqnarray*}
\xymatrix{
C(Y_K^S) \rtimes (J_K^{S^c} \times I_K^S) \ar[r]^{\Phi} \ar[d]^{E} & C(\hat{J}_K) \otimes B \ar[d]^{\mu \otimes \mathrm{id}_B} \\
C(Y_K^S) \ar[r] & B,
} 
\end{eqnarray*}
where $\mu$ is the Haar measure of $\hat{J}_K$. The homomorphism of the bottom line is injective by \cite[Lemma 3.12]{T}. 
This implies that $\Phi$ is injective. 
\end{proof}

Next, we focus on second maximal primitive ideals. 
\begin{dfn} \label{secmax}
Let $A$ be a $C^*$-algebra and let $P$ be a primitive ideal of $A$. 
We say that $P$ is second maximal if the following holds: 
\begin{enumerate}
\item There exists a primitive ideal $Q$ of $A$ such that $P \subsetneq Q$. 
\item There does not exist a pair of primitive ideals $Q_1,Q_2$ of $A$ such that $P \subsetneq Q_1 \subsetneq Q_2$. 
\end{enumerate}
\end{dfn}

Note that a maximal primitive ideal $Q$ in the condition (1) may not be unique. 
For Bost-Connes $C^*$-algebras, second maximal primitive ideals are exactly of the form of $P_{\{\p\}^c,\gamma}$ 
for some prime $\p$ and $\gamma \in \hat{\Gamma}_{\{\p\}^c}$ by Proposition \ref{latticestr}. 
In the case of $K= \Q$ or imaginary quadratic fields, $P_{\{\p\}^c}$ is a second maximal primitive ideal. 

\begin{dfn}
Let $\I_{1,K}$ be the set of all maximal primitive ideals of $A_K$, and 
let $\I_{2,K}$ be the set of all second maximal primitive ideals of $A_K$. 
We consider $\I_{1,K}$ and $\I_{2,K}$ as topological spaces with the relative topology of $\Prim A_K$. 
\end{dfn}

\begin{lem}
The space $\I_{2,K}$ is equal to the direct sum of $\hat{\Gamma}_{\{\p\}^c}$ for all $\p$ as a topological space. 
In particular, $\I_{2,K}$ is Hausdorff. 
\end{lem}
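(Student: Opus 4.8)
The plan is to establish the homeomorphism $\I_{2,K} \cong \bigsqcup_{\p} \hat{\Gamma}_{\{\p\}^c}$ in two stages: first identify the underlying set, then analyze the relative topology. For the set-level identification, I would use Proposition \ref{latticestr} together with the description of $\Prim A_K$ as $\bigcup_{S \subset \P_K} \hat{\Gamma}_S$. A primitive ideal $P_{S,\gamma}$ is second maximal precisely when there is a strictly larger primitive ideal but no chain of length two above it; by Proposition \ref{latticestr} the containment relation $P_{S,\gamma} \subset P_{T,\gamma'}$ forces $\gamma' = \gamma$ (on the appropriate subgroups) and $S \subset T$, so the order structure on $\Prim A_K$ above a fixed $P_{S,\gamma}$ is governed by the inclusion lattice of subsets $S \subset T \subset \P_K$. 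The maximal primitive ideals correspond to $S = \P_K$ (this is implicit in \cite{T} and the discussion preceding the lemma), so second maximal ideals are exactly those $P_{S,\gamma}$ with $S = \{\p\}^c$ for a single prime $\p$: these have $\P_K$ directly above them and nothing strictly between. This gives the bijection with $\bigsqcup_{\p} \hat{\Gamma}_{\{\p\}^c}$ on the level of sets.

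Next I would handle the topology. The space $\I_{2,K}$ carries the relative topology from $\Prim A_K$, whose topology is the hull-kernel (Jacobson) topology; via the canonical open continuous surjection $2^{\P_K} \times \hat{J}_K \to \Prim A_K$ of part (2) of the cited theorem, with $2^{\P_K}$ given the power-cofinite topology, I can transport questions about open sets upstairs. The key point is that the subsets $\{\p\}^c \in 2^{\P_K}$ — the co-atoms — form a discrete subspace in the power-cofinite topology: each singleton complement $\{\p\}^c$ is the unique point of $2^{\P_K}$ in a suitable cofinite-generated open set that avoids all other co-atoms and the top element $\P_K$. Pulling this back, I would show that for each fixed $\p$, the piece $\hat{\Gamma}_{\{\p\}^c}$ sits inside $\I_{2,K}$ as a relatively open (and closed) subset, and that the subspace topology it inherits agrees with the Pontrjagin dual topology on $\hat{\Gamma}_{\{\p\}^c}$ (a compact Hausdorff group), using that on a fixed fiber the bundle structure of $\Prim A_K$ over $2^{\P_K}$ restricts to the Fell topology on $\hat{\Gamma}_S$, which for the compact abelian group $\Gamma_S$ is just the usual dual topology. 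Hausdorffness of $\I_{2,K}$ then follows because it is a disjoint union of Hausdorff pieces that are mutually separated (each clopen in $\I_{2,K}$).

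The main obstacle I anticipate is the topological separation step: showing that distinct co-atom fibers $\hat{\Gamma}_{\{\p\}^c}$ and $\hat{\Gamma}_{\{\q\}^c}$ cannot accumulate onto one another inside $\Prim A_K$. A priori in a non-Hausdorff primitive ideal space, points in one fiber could have every neighborhood meeting another fiber; here one must use that the closure of $\hat{\Gamma}_{\{\p\}^c}$ in $\Prim A_K$ consists only of ideals $P_{T,\gamma}$ with $\{\p\}^c \subset T$, i.e. $T = \{\p\}^c$ or $T = \P_K$, by the lattice statement of Proposition \ref{latticestr} and continuity/openness of the surjection. Since $P_{\P_K,\cdot}$-type ideals are maximal and hence excluded from $\I_{2,K}$, the closure of $\hat{\Gamma}_{\{\p\}^c}$ \emph{within} $\I_{2,K}$ is contained in $\hat{\Gamma}_{\{\p\}^c}$ itself. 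Combined with compactness of each $\hat{\Gamma}_{\{\p\}^c}$, this yields that each fiber is clopen in $\I_{2,K}$ and that $\I_{2,K}$ is exactly their topological disjoint sum; Hausdorffness is then immediate, and a coordinate-free check that the subspace topology on each fiber coincides with its intrinsic dual-group topology finishes the argument.
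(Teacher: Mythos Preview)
Your approach is essentially the same as the paper's: both use the open continuous surjection $2^{\P_K}\times\hat{J}_K\to\Prim A_K$, observe that the set $Q_2=\{\{\p\}^c:\p\in\P_K\}$ of co-atoms is discrete (equivalently, Hausdorff) in the relative power-cofinite topology, and deduce that each $\hat{\Gamma}_{\{\p\}^c}$ is compact open in $\I_{2,K}$. Your write-up is more detailed---you spell out the set-level identification, verify the fiberwise subspace topology, and add a backup argument via Jacobson closures---but the skeleton is identical to the paper's short proof.

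Two small slips to clean up. First, Proposition~\ref{latticestr} as stated only compares $P_{S,\gamma}$ and $P_{T,\gamma}$ for the \emph{same} $\gamma$; your claim that $P_{S,\gamma}\subset P_{T,\gamma'}$ forces agreement of characters needs a word more (it is true, but not literally what the proposition says---the paper itself makes the same leap just before the lemma). Second, you call $\Gamma_S$ a ``compact abelian group''; it is discrete, and it is the dual $\hat{\Gamma}_S$ that is compact. Neither affects the validity of the argument.
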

\begin{proof}
Let 
$Q_2 = \{ \{\p\}^c \in 2^{\P_K}\ |\ \p \mbox{ is a prime} \}$. 
Then we can check that $Q_2$ is Hausdorff with respect to the relative topology of the power-cofinite topology. 
Let $\pi:2^{\P_K} \times \hat{J}_K \rightarrow \Prim A_K$ be the canonical map. Then we can see that 
the restriction $\pi:Q_2 \times \hat{J}_K \rightarrow \I_{2,K}$ is an open continuous surjection. 
This means that each $\hat{\Gamma}_{\{\p\}^c}$ is compact open inside $\I_{2,K}$. 
\end{proof}

In summary, we have the following proposition:
\begin{prop} \label{summary}
There is one-to-one correspondence between $\P_K$ and connected components of $\I_{2,K}$. 
The connected component $\mathcal{C}_{\p}$ corresponding to $\p$ is equal to $\hat{\Gamma}_{\{\p\}^c}$, and we have $\bigcap \mathcal{C}_{\p}=P_{\{\p\}^c}$. 
\end{prop}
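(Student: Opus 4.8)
The plan is to assemble Proposition~\ref{summary} directly from the three statements that precede it: the structural theorem describing $\Prim A_K$ as $\bigcup_S \hat{\Gamma}_S$, Proposition~\ref{latticestr} describing the inclusion order, and the lemma identifying $\I_{2,K}$ as the topological disjoint sum $\bigsqcup_{\p} \hat{\Gamma}_{\{\p\}^c}$. First I would recall, from the discussion following Definition~\ref{secmax}, that a primitive ideal is second maximal precisely when it is of the form $P_{\{\p\}^c,\gamma}$ for some finite prime $\p$ and some $\gamma\in\hat{\Gamma}_{\{\p\}^c}$: indeed the maximal primitive ideals are the $P_{\P_K,\gamma}$ (here $\Gamma_{\P_K}$ is the ``full'' group), and by Proposition~\ref{latticestr} the only subsets $T$ with $\{\p\}^c\subsetneq T$ are $T=\P_K$, so no chain of length three starts at $P_{\{\p\}^c,\gamma}$, while $P_{\{\p\}^c,\gamma}\subsetneq P_{\P_K,\gamma}$ gives condition~(1). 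Conversely, if $S\subsetneq\P_K$ and $|\,\P_K\setminus S\,|\ge 2$, pick two primes $\p_1,\p_2\notin S$; then $S\subsetneq S\cup\{\p_1\}\subsetneq S\cup\{\p_1,\p_2\}$ yields, via Proposition~\ref{latticestr}, a chain of length three, so $P_{S,\gamma}$ is not second maximal. Hence the indexing set of $\I_{2,K}$ is exactly $\{\{\p\}^c : \p\in\P_K\}$, which is in bijection with $\P_K$.

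Next I would establish the connected-component claim. By the preceding lemma, $\I_{2,K}\cong\bigsqcup_{\p}\hat{\Gamma}_{\{\p\}^c}$ as a topological space, and each $\hat{\Gamma}_{\{\p\}^c}$ is open and closed in $\I_{2,K}$. Since $\Gamma_{\{\p\}^c}$ is a (discrete, countable) abelian group, its Pontrjagin dual $\hat{\Gamma}_{\{\p\}^c}$ is a compact abelian group, hence connected if and only if $\Gamma_{\{\p\}^c}$ is torsion-free; but in any case $\hat{\Gamma}_{\{\p\}^c}$ — being a quotient/dual situation arising here as a closed subgroup of a torus $\hat{J}_K$ — I should argue is connected. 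Here the cleanest route is: $\Gamma_S$ is defined as a subgroup of $P_K^1$, which sits inside the free abelian group $J_K$ on the primes, so $\Gamma_{\{\p\}^c}$ is a subgroup of a free abelian group, hence itself free abelian, hence torsion-free, hence $\hat{\Gamma}_{\{\p\}^c}$ is a connected compact group (a solenoid/torus). Therefore each $\hat{\Gamma}_{\{\p\}^c}$ is a connected clopen subset, i.e.\ precisely a connected component, and these components are indexed bijectively by $\P_K$. Setting $\mathcal{C}_{\p}=\hat{\Gamma}_{\{\p\}^c}$ gives the first two assertions.

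Finally, the identity $\bigcap\mathcal{C}_{\p}=P_{\{\p\}^c}$ follows from the lemma computing $\bigcap_{\gamma\in\hat{\Gamma}_S}P_{S,\gamma}=P_S$ applied with $S=\{\p\}^c$: the intersection of all primitive ideals in the component $\mathcal{C}_{\p}=\hat{\Gamma}_{\{\p\}^c}$ is exactly $\bigcap_{\gamma\in\hat{\Gamma}_{\{\p\}^c}}P_{\{\p\}^c,\gamma}=P_{\{\p\}^c}$. I expect the main obstacle to be the torsion-freeness (equivalently, connectedness) argument: one must be careful that $\Gamma_{\{\p\}^c}$, although defined through the closure $\overline{K_+^*}$ inside $\Af^*$, really does land in $P_K^1\subset J_K$ and thus inherits freeness from $J_K$; the map $a\mapsto(a)$ sending a totally positive element to its principal ideal has image in the free abelian group $J_K$, and any subgroup of a free abelian group is free, so there is no torsion and no disconnection. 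Once this is in hand, the rest is bookkeeping with the three cited results.
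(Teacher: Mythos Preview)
Your proposal is correct and is precisely the assembly the paper intends: Proposition~\ref{summary} is stated in the paper as a summary with no separate proof, so the expected argument is exactly to combine Proposition~\ref{latticestr}, the lemma identifying $\I_{2,K}$ with $\bigsqcup_{\p}\hat{\Gamma}_{\{\p\}^c}$, and the lemma $\bigcap_{\gamma}P_{S,\gamma}=P_S$.

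The one point on which you go beyond the paper is the connectedness of each $\hat{\Gamma}_{\{\p\}^c}$: the preceding lemma in the paper only asserts that these duals are compact open pieces of $\I_{2,K}$, not that they are connected, so some justification is indeed needed before one can call them \emph{connected components}. Your argument via torsion-freeness is the right one and goes through cleanly: $\Gamma_{\{\p\}^c}$ is by definition a subgroup of $P_K^1\subset J_K$, and $J_K$ is free abelian on the finite primes, so $\Gamma_{\{\p\}^c}$ is torsion-free and its Pontrjagin dual is a connected compact abelian group. Your worry about the closure $\overline{K_+^*}$ is unnecessary: regardless of whether $a$ lies in $K_+^*$ or only in its closure inside $\Af^*$, the associated ideal $(a)$ is an element of $J_K$, and that is all you need.
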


We focus on the $C^*$-algebra $P_{\P_K}/P_{\{\p\}^c}$. 
By definition, we have 
\begin{equation*}
P_{\P_K} = \ker (C(Y_K) \rtimes I_K \rightarrow C(Y_K^{\P_K}) \rtimes J_K), 
\end{equation*}
and by \cite[Proposition 3.8]{T}, $P_{\P_K} = \bigcap \I_{1,K}$. 

\begin{lem} \label{simple}
We have 
\begin{equation*}
P_{\P_K}/P_{\{\p\}^c} \cong \KK \otimes C(G_{\p}) \rtimes J_K^{\p}, 
\end{equation*}
where $G_{\p}$ is the profinite group in Section \ref{arith}.
\end{lem}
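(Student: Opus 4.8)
The plan is to work not with $A_K = C(Y_K)\rtimes I_K$ directly, but with the group crossed product $\tilde A_K = C_0(X_K)\rtimes J_K$ and the full corner $A_K = 1_{Y_K}\tilde A_K 1_{Y_K}$; this is essential, because only over the larger space $X_K$ does the ``$\p$-direction'' become a genuine $\Z$-orbit rather than an $\N$-orbit, and that is what produces the factor $\KK$. For $S\subset\P_K$ put $X_K^S = \{[\rho,\alpha]\in X_K\ |\ \rho_\q = 0 \text{ for all } \q\in S\}$, a closed $J_K$-invariant subset with $Y_K^S = X_K^S\cap Y_K$. By \cite{T} the ideal $P_S\subset A_K$ is the corner $1_{Y_K}\tilde P_S 1_{Y_K}$ of the ideal $\tilde P_S = C_0((X_K^S)^c)\rtimes J_K\subset\tilde A_K$, and $\tilde A_K/\tilde P_S\cong C_0(X_K^S)\rtimes J_K$. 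Since $X_K^{\P_K}\subset X_K^{\{\p\}^c}$ one gets $\tilde P_{\{\p\}^c}\subset\tilde P_{\P_K}$ and, because $X_K^{\{\p\}^c}\setminus X_K^{\P_K}$ is a locally closed $J_K$-invariant set,
\[
\tilde P_{\P_K}/\tilde P_{\{\p\}^c}\ \cong\ C_0\big(X_K^{\{\p\}^c}\setminus X_K^{\P_K}\big)\rtimes J_K,
\]
and hence $P_{\P_K}/P_{\{\p\}^c} = 1_{Y_K}\big(C_0(X_K^{\{\p\}^c}\setminus X_K^{\P_K})\rtimes J_K\big)1_{Y_K}$.

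The next step is to identify the $J_K$-space $Z_\p := X_K^{\{\p\}^c}\setminus X_K^{\P_K}$. A point of $Z_\p$ is $[\rho,\alpha]$ with $\rho$ supported at $\p$ and $\rho_\p\in K_\p^\times$; modulo $\hat{\O}_K^* = \O_\p^*\times\prod_{\q\neq\p}\O_\q^*$, the factor $U=\{1\}\times\prod_{\q\neq\p}\O_\q^*$ acts only on $\alpha$ (via Artin reciprocity), while, using $K_\p^\times\cong\Z\times\O_\p^*$, the factor $\O_\p^*$ acts freely on the $\O_\p^*$-coordinate of $\rho_\p$. Choosing a uniformizer $\pi_\p$ therefore produces a homeomorphism $Z_\p\cong\Z\times(\GKab/W_\p)$, where $W_\p$ is the image of $U$ in $\GKab$ under Artin reciprocity (closed, since $U$ is compact) and the $\Z$-coordinate records $v_\p(\rho_\p)$. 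Since Artin reciprocity identifies $\GKab$ with $\Af^*/\overline{K^*_+}$, and $\overline{K^*_+}\cdot U = \overline{K^*_+U}$ because $U$ is compact, the fibre $\GKab/W_\p$ equals $\Af^*/\overline{K^*_+U} = G_\p$, so $Z_\p\cong\Z\times G_\p$. Writing $J_K = J_K^\p\times\langle\p\rangle$, the subgroup $J_K^\p$ acts trivially on $\Z$ and by translation on $G_\p$ through the dense embedding $J_K^\p\hookrightarrow G_\p$ (see the discussion before Proposition \ref{inverselimit}), while $\p$ acts by the shift on $\Z$ and by translation by a fixed element $g_\p\in G_\p$, the class of the idèle equal to $\pi_\p$ at $\p$ and $1$ elsewhere. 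Under these identifications $1_{Y_K}$ is the characteristic function of $\Z_{\geq 0}\times G_\p$.

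The heart of the argument is to untwist the $\langle\p\rangle$-action. Conjugation by the automorphism $\Phi$ of $C_0(\Z\times G_\p)$ with $(\Phi f)(n,g) = f(n,g_\p^{-n}g)$ carries the $\langle\p\rangle$-action to the product of the shift on $\Z$ with the trivial action on $G_\p$, leaves the $J_K^\p$-action (translation on $G_\p$) unchanged because $G_\p$ is abelian, and fixes $1_{Y_K}$. Therefore
\[
C_0\big(Z_\p\big)\rtimes J_K\ \cong\ \big(C_0(\Z)\rtimes_{\mathrm{shift}}\Z\big)\otimes\big(C(G_\p)\rtimes J_K^\p\big)\ \cong\ \KK(\ell^2(\Z))\otimes\big(C(G_\p)\rtimes J_K^\p\big),
\]
where the last step uses that the transformation groupoid of $\Z$ acting on itself by translation is the pair groupoid. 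Finally, under $\Phi$ the projection $1_{Y_K}$ becomes the projection onto $\ell^2(\Z_{\geq 0})\subset\ell^2(\Z)$, and cutting down replaces $\KK(\ell^2(\Z))$ by $\KK(\ell^2(\Z_{\geq 0}))\cong\KK$; this yields $P_{\P_K}/P_{\{\p\}^c}\cong\KK\otimes\big(C(G_\p)\rtimes J_K^\p\big)$, as desired.

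The step I expect to be the real obstacle is the middle one together with the untwisting: one must check carefully that the homeomorphism $Z_\p\cong\Z\times G_\p$ is $J_K$-equivariant in exactly the stated way --- in particular that the unit adjustments in $\O_\p^*$ needed to renormalize $\rho_\p$ along an orbit contribute precisely translations by powers of $g_\p$ and nothing more --- and that $\Phi$ genuinely intertwines the crossed-product data, so that the tensor splitting and the final corner computation are legitimate. The ideal-theoretic input in the first paragraph should be immediate from \cite{T}, and the Artin-reciprocity bookkeeping in the second, while it must be done with care, is routine.
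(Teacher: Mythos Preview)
Your proposal is correct and follows essentially the same route as the paper: identify the underlying space as (a copy of the integers) $\times\, G_\p$, untwist the $\p$-action by the shear $(n,\alpha)\mapsto(n,g_\p^{\,n}\alpha)$ so that the action splits as a product, and then recognise the $\p$-factor as $\KK$. The only organisational difference is that the paper stays on the $Y_K$-side and works with the semigroup crossed product $C_0(\N\times G_\p)\rtimes(\p^{\N}\times J_K^\p)$, passing to the group picture only at the last step to identify $C_0(\N)\rtimes\N$ as a full corner of $C_0(\Z)\rtimes\Z\cong\KK$, whereas you pass to the $X_K$-side and the group crossed product from the outset and take the corner by $1_{Y_K}$ at the very end.
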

\begin{proof}
We have 
\begin{eqnarray*}
&P_{\P_K}/P_{\{\p\}^c} &= \ker (C(Y_K^S) \rtimes (J_K^{\p} \times \p^{\N}) \rightarrow C(Y_K^{\P_K}) \rtimes J_K) \\
&&= C_0(\O_{\p}^{\times} \times_{\hat{\O}_K^*} \GKab ) \rtimes (J_K^{\p} \times \p^{\N}), 
\end{eqnarray*}
and we focus on the dynamical system $J_K^{\p} \times \p^{\N} \curvearrowright \O_{\p}^{\times} \times_{\hat{\O}_K^*} \GKab$. 
First, $\O_{\p}^{\times} \times_{\hat{\O}_K^*} \GKab$ is naturally identified with 
\begin{equation*}
\O_{\p}^{\times} \times_{\O_{\p}^*} (\GKab/U) = \O_{\p}^{\times} \times_{\O_{\p}^*} G_{\p}, 
\end{equation*}
where $G_{\p}$ and $U$ are as in Section \ref{arith}. 
Fix a prime element $\pi_{\p}$ of $K_{\p}$. Then $\displaystyle \O_{\p}^{\times} \times_{\O_{\p}^*} G_{\p}$ is homeomorphic to $\N \times G_{\p}$ 
by sending $[\pi_{\p}^n, \alpha]$ to $(n,\alpha)$ for $n \in \N$ and $\alpha \in G_{\p}$. 
Under this identification, the action is identified with the following action: 
\begin{equation*}
\q (n,\alpha) = (n, \q^{-1}\alpha),\ \p (n,\alpha) = (n+1, \pi_{\p}^{-1}\alpha). 
\end{equation*}

Moreover, by the homeomorphism of $\N \times G_{\p} \rightarrow \N \times G_{\p}$ defined by $(n, \alpha)\mapsto (n, \pi_{\p}^n\alpha)$, 
this action is conjugate to the following action: 
\begin{equation*}
\q (n,\alpha) = (n, \q^{-1}\alpha),\ \p (n,\alpha) = (n+1, \alpha). 
\end{equation*}

Hence, we have 
\begin{equation*}
P_{\P_K}/P_{\{\p\}^c} \cong C_0(\N \times G_{\p}) \rtimes_{\alpha \otimes \beta} \N \times J_K^{\p} 
\cong (C_0(\N) \rtimes_{\alpha} \N) \otimes (C(G_{\p}) \rtimes_{\beta} J_K^{\p}), 
\end{equation*}
where $\alpha$ is the action by addition and $\beta$ is the action by multiplication ($J_K^{\p}$ is naturally identified with a subgroup of $G_{\p}$). 
The second isomorphism is established by writing both algebras as corners of group crossed products and applying 
the well-known decomposition theorem for tensor product actions.  
$C_0(\N) \rtimes_{\alpha} \N$ is isomorphic to $\KK$, because it is written as a full corner of $C_0(\Z)\rtimes \Z$. 
\end{proof}

\begin{rmk}
Brownlowe-Larsen-Putnam-Raeburn give a similar presentation in the case of $K=\Q$. 
Lemma \ref{simple} is a kind of generalization of \cite[Theorem 4.1 (2)]{BLPR} in the case of $\P \setminus S$ is one point. 
\end{rmk}

\subsection{$K_0$-groups of profinite actions}
In this section, we prepare a general machinery which is used in the next section. 
Let $\Gamma$ be a discrete amenable torsion-free group. 
In our case, we only need the case of $\Gamma = \Z^{\infty}$, but here we treat the general case. 
Let $P_m$ be a decreasing sequence of finite index normal subgroups of $\Gamma$ such that $\bigcap_m P_m = 1$. 
Define a profinite group $G$ by $\displaystyle G= \lim_{\longleftarrow m} \Gamma /P_m$. 
Then, by assumption, $\Gamma$ is a dense subgroup of $G$ and $G/\overline{P_m} = \Gamma /P_m$. 
Let $h_m = [\Gamma : P_m]$. 
The $C^*$-algebra $C(G)\rtimes \Gamma$ is simple and has a unique tracial state. 

\begin{prop} \label{trace}
Let $\tau$ be the unique tracial state of $C(G)\rtimes \Gamma$. Then we have 
\begin{equation*}
\tau_* (K_0(C(G)\rtimes \Gamma)) = \bigcup_m h_m^{-1}\Z. 
\end{equation*}
\end{prop}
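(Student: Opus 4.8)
The plan is to realize $C(G)\rtimes\Gamma$ as an inductive limit whose building blocks are the finite-dimensional-over-the-center algebras coming from the tower $\Gamma/P_m$, compute the trace on $K_0$ at each stage, and pass to the limit. First I would use the isomorphism $G/\overline{P_m}=\Gamma/P_m$ to write $C(G)=\varinjlim_m C(\Gamma/P_m)$ as an inductive limit of finite-dimensional commutative $C^*$-algebras, where the connecting maps are the (unital) inclusions dual to the surjections $\Gamma/P_{m+1}\twoheadrightarrow\Gamma/P_m$. Taking the crossed product by $\Gamma$ is continuous for inductive limits, so $C(G)\rtimes\Gamma=\varinjlim_m \bigl(C(\Gamma/P_m)\rtimes\Gamma\bigr)$. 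Each term $C(\Gamma/P_m)\rtimes\Gamma$ is, by the standard imprimitivity/induced-algebra identification, isomorphic to $M_{h_m}(C^*(P_m))$: concretely $C(\Gamma/P_m)\rtimes\Gamma\cong C^*(\Gamma)\otimes_{C^*(\Gamma)}\cdots$, or more directly one uses that $\Gamma$ acts freely and transitively on $\Gamma/P_m$ with stabilizer $P_m$, giving a Morita-type equality with the full matrix algebra $M_{h_m}(C^*(P_m))$.

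Next I would track the trace. The unique tracial state $\tau$ on $C(G)\rtimes\Gamma$ restricts on the $m$-th building block $M_{h_m}(C^*(P_m))$ to $\tfrac{1}{h_m}\,\mathrm{tr}_{h_m}\otimes\tau_{P_m}$, where $\tau_{P_m}$ is the canonical trace on $C^*(P_m)$ (the one dual to evaluation at the identity character, i.e. the trace whose induced map on $K_0$ sends the class of the unit to $1$). Since $P_m$ is torsion-free amenable, one expects $(\tau_{P_m})_*(K_0(C^*(P_m)))=\Z$ — this is where I would invoke amenability and torsion-freeness; for $\Gamma=\Z^{\infty}$ it is immediate because $C^*(\Z^\infty)=C(\widehat{\Z^\infty})$ has $K_0=\Z$ with $\tau_*$ sending $[1]\mapsto1$, and in general it follows from the Baum–Connes picture / the fact that the canonical trace on $C^*_r(P_m)$ has range $\Z$ on $K_0$ for such groups. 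Granting this, $(\tfrac1{h_m}\mathrm{tr}\otimes\tau_{P_m})_*$ on $K_0(M_{h_m}(C^*(P_m)))=K_0(C^*(P_m))=\Z$ has range exactly $h_m^{-1}\Z$.

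Finally I would compute the colimit. $K_0$ commutes with inductive limits, so $\tau_*(K_0(C(G)\rtimes\Gamma))$ is the union over $m$ of the images of $\tau_*$ on each $K_0(C(\Gamma/P_m)\rtimes\Gamma)$; by the previous paragraph the $m$-th image is $h_m^{-1}\Z$, and since the $P_m$ are decreasing the $h_m$ are increasing (each $h_m\mid h_{m+1}$), so these subgroups are nested and $\bigcup_m h_m^{-1}\Z$ is genuinely the answer stated. One must also check compatibility: the connecting map $K_0(C(\Gamma/P_m)\rtimes\Gamma)\to K_0(C(\Gamma/P_{m+1})\rtimes\Gamma)$ intertwines the two trace maps (both being restrictions of the single trace $\tau$ on the limit), which is automatic once everything is phrased in terms of $\tau$ itself.

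\textbf{Main obstacle.} The technical heart is the two identifications that make the matrix picture legitimate: (i) the isomorphism $C(\Gamma/P_m)\rtimes\Gamma\cong M_{h_m}(C^*(P_m))$ with explicit control of where the trace goes, and (ii) the input $(\tau_{P_m})_*(K_0(C^*(P_m)))=\Z$. For $\Gamma=\Z^\infty$, (ii) is elementary and (i) is the classical Green imprimitivity theorem applied to the transitive $\Gamma$-space $\Gamma/P_m$, so in the case actually needed there is no serious difficulty; the phrase ``in our case, we only need $\Gamma=\Z^\infty$'' suggests the author intends exactly this shortcut. In the stated generality the only real content is pinning down that the canonical trace on $C^*(P_m)$ is faithful with $K_0$-range $\Z$ — routine for nice amenable groups but the one place a citation is required.
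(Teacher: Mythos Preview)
Your proposal is correct and follows essentially the same route as the paper: write $C(G)\rtimes\Gamma$ as the inductive limit over $m$ of $C(\Gamma/P_m)\rtimes\Gamma$, use imprimitivity to identify each stage with (a matrix algebra over) $C^*_r(P_m)$, track that $\tau$ restricts to $h_m^{-1}\tau^{P_m}$, invoke torsion-freeness plus Baum--Connes to get $\tau^{P_m}_*(K_0(C^*_r(P_m)))=\Z$, and take the union. One small slip: $K_0(C^*(\Z^\infty))=K_0(C(\T^\infty))$ is \emph{not} $\Z$ (it is an infinite exterior algebra), only its image under the canonical trace is $\Z$---but that trace image is exactly what you need, and is precisely what the paper obtains by citing Baum--Connes rather than by the shortcut you sketch.
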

\begin{proof}
By assumption, we have
\begin{equation*}
C(G)\rtimes \Gamma \cong \lim_{\longrightarrow } C(\Gamma/P_m) \rtimes \Gamma, 
\end{equation*}
and by the imprimitivity theorem, $C(\Gamma/P_m) \rtimes \Gamma$ is Morita equivalent to $C^*_r(P_m)$. 
In this case, $1_{P_m}C^*_r(P_m) = 1_{P_m}(C(\Gamma/P_m) \rtimes \Gamma)1_{P_m}$ and $1_{P_m}$ is a full projection of $C(\Gamma/P_m) \rtimes \Gamma$. 
The inclusion $1_{P_m}C^*_r(P_m) \hookrightarrow C(\Gamma/P_m) \rtimes \Gamma$ gives the isomorphism of $K$-groups. 

Let $\tau$ be the unique tracial state of $C(G)\rtimes \Gamma$. Then it is equal to $\mu \circ E$, where $\mu$ is the Haar measure of $G$ and 
$E$ is the canonical conditional expectation $C(G)\rtimes \Gamma \rightarrow C(G)$. 
Then the restriction of $\tau$ onto $C(\Gamma/P_m) \rtimes \Gamma$ is equal to $\mu_m \circ E$, where $\mu_m$ is 
the normalized counting measure of $\Gamma/P_m$. 
Hence the restriction of $\tau$ onto $1_{P_m}C^*_r(P_m)$ is equal to $h_m^{-1}\tau^{P_m}$, where $\tau^{P_m}$ is the canonical trace of $C^*_r(P_m)$. 

Since $P_m$ is torsion-free and satisfies the Baum-Connes conjecture (cf. \cite[Proposition 6.3.1]{V}), we have
\begin{equation*}
\tau^{P_m}_*(K_0(C^*_r(P_m))) = \Z 
\end{equation*}
as a subgroup of $\R$. 
Hence 
\begin{eqnarray*}
&\tau_*(K_0(C(G)\rtimes \Gamma)) &= \bigcup_m \tau_* (C(\Gamma/P_m) \rtimes \Gamma) \\
&&= \bigcup_m h_m^{-1}\tau_*^{P_m}(C^*_r(P_m)) \\
&&= \bigcup_m h_m^{-1}\Z. 
\end{eqnarray*}

\end{proof}

In our case we actually have to treat with unbounded traces. In this paper, 
we tacitly assume unbounded traces have finite values on finite projections. 
The following lemma is proved in usual way: 
\begin{lem} \label{unique}
Let $A$ be a simple $C^*$-algebra with a unique tracial state $\tau$. 
Then $\KK \otimes A$ has a unique unbounded trace $\mathrm{Tr} \otimes \tau$ up to scalar multiplication. 
\end{lem}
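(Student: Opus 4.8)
The plan is to reduce the uniqueness of the unbounded trace on $\KK \otimes A$ to the uniqueness of the tracial state on $A$ via the standard stabilization argument. First I would recall the setup: a densely defined lower semicontinuous trace $\varphi$ on $\KK \otimes A$ is determined by its restriction to the Pedersen ideal, and by our standing convention it takes finite values on finite projections. Fix a rank-one projection $e \in \KK$, so that $e \otimes 1_{M(A)}$ (or rather $e$ tensored with an approximate unit of $A$) gives a hereditary subalgebra $e\KK e \otimes A \cong A$, which is full in $\KK \otimes A$.

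The key steps, in order, are as follows. First, given any nonzero densely defined lower semicontinuous trace $\varphi$ on $\KK \otimes A$, restrict it to the corner $(e \otimes 1)(\KK \otimes A)(e \otimes 1) \cong A$; since $A$ is simple and unital with $1_A$ a finite projection, this restriction is a finite nonzero trace on $A$, hence a positive scalar multiple of $\tau$ by the uniqueness hypothesis. After rescaling $\varphi$, we may assume $\varphi|_A = \tau$. Second, for matrix units $e_{ij} \in \KK$ one checks $\varphi(e_{ii} \otimes a) = \varphi(e_{11} \otimes a) = \tau(a)$ using the trace property (conjugating $e_{11}\otimes a^{1/2}$ by the partial isometry $e_{i1}\otimes 1$), so $\varphi$ agrees with $\mathrm{Tr}\otimes\tau$ on all matrix blocks $M_n(\C) \otimes A$, and these are dense in $\KK \otimes A$ in the relevant sense (the finite-rank part of $\KK$ tensor $A$ is dense in the Pedersen ideal). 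Third, invoke lower semicontinuity and the fact that two lower semicontinuous traces agreeing on a dense hereditary subalgebra — equivalently on a dense subset of the positive cone of the Pedersen ideal — must coincide, to conclude $\varphi = \mathrm{Tr}\otimes\tau$.

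The main obstacle, and the reason this is stated as a lemma rather than left to the reader, is the care needed with domains of definition: an unbounded trace is only densely defined, and one must verify that the relevant corner $A \hookrightarrow \KK \otimes A$ lands in the domain (which is where the standing convention that unbounded traces are finite on finite projections is used), and that agreement on $\bigcup_n M_n(\C)\otimes A$ genuinely propagates to all of $\KK \otimes A$ rather than to some proper hereditary ideal. Since $A$ is simple, $\KK \otimes A$ is simple as well, so any nonzero lower semicontinuous trace is automatically densely defined with dense Pedersen ideal, and the finite-rank approximation argument goes through without pathology. The remaining verifications — that the restriction to $A$ is tracial and finite, that conjugation by matrix units gives the claimed values, that lower semicontinuity upgrades pointwise agreement on a dense subalgebra to equality — are routine, which is why we only indicate that the lemma is proved in the usual way.
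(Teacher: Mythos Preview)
Your argument is correct and is precisely the ``usual way'' the paper alludes to; the paper gives no proof beyond that phrase, and the stabilization-and-corner argument you outline is the standard one. The only caveat is that the lemma as stated does not assume $A$ unital, but your parenthetical about approximate units handles this, and in any case the paper only applies the lemma to the unital algebra $C(G_{\p})\rtimes J_K^{\p}$.
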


\subsection{Conclusions}

\begin{thm} \label{main2}
Let $K$ be a number field and let $\p$ be a finite prime of $K$, and let $\p \cap \Z = (p)$. 
Then $\bigcap \I_{1,K}/ \bigcap \mathcal{C}_{\p}$ is a simple $C^*$-algebra with a unique unbounded trace $T$ up to scalar multiplication, and we have 
\begin{equation*}
T_*(K_0 (\bigcap \I_{1,K}/\bigcap \mathcal{C}_{\p})) \cong \Z[1/p]. 
\end{equation*}
\end{thm}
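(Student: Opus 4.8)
The plan is to reduce everything to the machinery already assembled in the previous subsections. By Lemma~\ref{simple} we have
\begin{equation*}
\bigcap \I_{1,K}/\bigcap \mathcal{C}_{\p} = P_{\P_K}/P_{\{\p\}^c} \cong \KK \otimes (C(G_{\p}) \rtimes J_K^{\p}),
\end{equation*}
so the first task is to verify that $C(G_{\p}) \rtimes J_K^{\p}$ fits the framework of Section~\ref{arith} on $K_0$-groups of profinite actions. Here $\Gamma = J_K^{\p}$ is a free abelian group (it is the free abelian group on the primes $\q \neq \p$), hence discrete, amenable and torsion-free; and by Proposition~\ref{inverselimit} we have $G_{\p} = \varprojlim_m J_K^{\p}/P_K^{\p^m}$ with $P_m = P_K^{\p^m}$ a decreasing sequence of finite-index subgroups of $J_K^{\p}$ with trivial intersection. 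So Proposition~\ref{trace} applies: $C(G_{\p}) \rtimes J_K^{\p}$ is simple with a unique tracial state $\tau$, and
\begin{equation*}
\tau_*(K_0(C(G_{\p}) \rtimes J_K^{\p})) = \bigcup_m h_m^{-1}\Z,
\qquad h_m = [J_K^{\p} : P_K^{\p^m}].
\end{equation*}

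The second step is to identify $\bigcup_m h_m^{-1}\Z$ with $\Z[1/p]$. Using the chain $P_K^1 = P_K^{\p^0} \supset P_K^{\p^1} \supset \cdots$ inside $J_K^{\p}$ and the exact description of successive quotients, Lemma~\ref{order} gives $[P_K^{\p^m} : P_K^{\p^{m+1}}] = p^f$ for every $m \geq 1$, where $f$ is the inertia degree of $\p$ over $p$. Hence $h_m$ is, up to the fixed constant $h_1 = [J_K^{\p} : P_K^{\p}]$, equal to $p^{f(m-1)} h_1$ for $m \geq 1$; since the exponents $f(m-1)$ run through all multiples of $f$ and in particular tend to infinity, $\bigcup_m h_m^{-1}\Z = h_1^{-1}\bigcup_m p^{-fm}\Z = h_1^{-1}\Z[1/p]$ as subgroups of $\R$. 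As an \emph{abstract} ordered group this is isomorphic to $\Z[1/p]$ (the scaling by $h_1^{-1}$ is harmless), so $\tau_*(K_0(C(G_{\p})\rtimes J_K^{\p})) \cong \Z[1/p]$.

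The third step passes from $C(G_{\p}) \rtimes J_K^{\p}$ to $\KK \otimes (C(G_{\p}) \rtimes J_K^{\p})$. Since $C(G_{\p}) \rtimes J_K^{\p}$ is simple with a unique tracial state, Lemma~\ref{unique} gives that $\KK \otimes (C(G_{\p}) \rtimes J_K^{\p})$ has a unique unbounded trace $T = \mathrm{Tr} \otimes \tau$ up to scalar multiplication; this is the trace $T$ of the statement, and simplicity of the quotient is immediate since tensoring a simple $C^*$-algebra with $\KK$ preserves simplicity. For the $K$-theory, the inclusion of a rank-one corner $e_{11} \otimes (C(G_{\p})\rtimes J_K^{\p}) \hookrightarrow \KK \otimes (C(G_{\p})\rtimes J_K^{\p})$ induces an isomorphism on $K_0$ and intertwines $\tau_*$ with $T_*$ up to the normalization of $T$, so $T_*(K_0(\KK \otimes (C(G_{\p})\rtimes J_K^{\p}))) \cong \tau_*(K_0(C(G_{\p})\rtimes J_K^{\p})) \cong \Z[1/p]$, and combining with Lemma~\ref{simple} finishes the proof.

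The only genuinely delicate point is bookkeeping about the normalization constant $h_1 = [J_K^{\p}:P_K^{\p}]$: one must be careful to record that the theorem claims an isomorphism of \emph{abstract} groups $T_*(K_0(-)) \cong \Z[1/p]$ — which is insensitive to rescaling the trace, hence insensitive to $h_1$ — rather than an equality of subgroups of $\R$. Since $T$ is only determined up to scalar anyway, this constant can simply be absorbed, and there is no real obstacle; everything else is a direct application of Lemma~\ref{simple}, Proposition~\ref{inverselimit}, Lemma~\ref{order}, Proposition~\ref{trace}, and Lemma~\ref{unique}.
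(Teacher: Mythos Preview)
Your proposal is correct and follows essentially the same route as the paper's own proof: identify the quotient with $\KK \otimes (C(G_{\p})\rtimes J_K^{\p})$ via Lemma~\ref{simple}, invoke Proposition~\ref{inverselimit} to place $C(G_{\p})\rtimes J_K^{\p}$ in the setting of Proposition~\ref{trace}, compute $\bigcup_m h_m^{-1}\Z$ using Lemma~\ref{order}, and pass to the stabilization via Lemma~\ref{unique}. Your extra remarks about the constant $h_1$ and the abstract-versus-subgroup isomorphism are helpful clarifications but do not change the argument.
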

\begin{proof}
By Proposition \ref{summary} and Lemma \ref{simple}, the $C^*$-algebra $\bigcap \I_{1,K}/ \bigcap \mathcal{C}_{\p}$ 
is isomorphic to $\KK \otimes C(G_{\p}) \rtimes J_K^{\p}$. 
Let $\tau$ be the unique trace of $C(G_{\p}) \rtimes J_K^{\p}$. 
By Lemma \ref{unique}, we may assume $T= \mathrm{Tr} \otimes \tau$ because the isomorphism class of $T_*(K_0 (\bigcap \I_{1,K}/\bigcap \mathcal{C}_{\p}))$ is 
independent of the choice of $T$. 
By Proposition \ref{inverselimit}, we have 
\begin{equation*}
C(G_{\p}) \rtimes J_K^{\p} \cong \lim_{\longrightarrow m} C(J_K^{\p}/P_K^{\p^m}) \rtimes J_K^{\p}. 
\end{equation*}
So by applying Proposition \ref{trace}, we have 
\begin{equation*}
T_*(K_0(\bigcap \I_{1,K}/ \bigcap \mathcal{C}_{\p})) = \tau_*(K_0(C(G_{\p}) \rtimes J_K^{\p})) = \bigcup_{m} h_m^{-1} \Z, 
\end{equation*}
where $h_m=[J_K^{\p}:P_K^{\p^m}]$. By Lemma \ref{order}, we have $h_{m+1}/h_m = p^f$, so $\bigcup_{m} h_m^{-1} \Z \cong \Z[1/p]$. 
\end{proof}

The proof of Theorem \ref{main1} is obtained by applying the theorem of Stuart-Perlis \cite{SP2}. 
For a rational prime $p$, $g_K(p)$ denotes the splitting number of $p$, i.e., the number of primes of $K$ which is above $p$. 

\begin{proof}[Proof of Theorem \ref{main1}]
By Theorem \ref{main2} and Proposition \ref{summary}, $g_K(p)$ is equal to 
the number of connected components $\mathcal{C}$ of $\I_{2,K}$ which satisfy $T_*(K_0(\bigcap \I_{1,K}/\bigcap \mathcal{C})) = \Z[1/p]$ 
for some unbounded trace $T$ of $\bigcap \I_{1,K}/\bigcap \mathcal{C}$. 
Since $\Z[1/p]\cong \Z[1/q]$ if and only if $p=q$ for any rational prime $p,q$, this number is preserved under the isomorphism. 
By \cite[Main Theorem]{SP2}, the equality of splitting numbers for all rational primes implies the equality of zeta functions. 
\end{proof}

Zeta functions consist of the information of the rational prime which is below a prime and the inertia degree. 
Since we applied a number theoretic theorem, the inertia degree is not naturally obtained. 
It may be interesting to ask how to get the inertia degree in an operator algebraic way from the $C^*$-algebra $A_K$. 

\section*{Acknowledgments}
This work was supported by the Program for Leading Graduate Schools, MEXT, Japan and JSPS KAKENHI Grant Number 13J01197. 
The author would like to thank to Yasuyuki Kawahigashi, Joachim Cuntz and Xin Li for fruitful conversations. 

\bibliographystyle{amsplain} 

\bibliography{ref}

\end{document}